\newcommand{\End}{\text{\rm End}}
\newcommand{\thmref}[1]{Theorem~\ref{#1}}
\newcommand{\lemref}[1]{Lemma~\ref{#1}}
\newcommand{\eqnref}[1]{~(\ref{#1})}
\newcommand{\germ}{\mathfrak}
\newtheorem{thm}{Theorem}[section]
\newtheorem{lem}[thm]{Lemma}
\theoremstyle{definition}
\newtheorem{cor}[thm]{Corollary}
\newtheorem{prop}[thm]{Proposition}
\subjclass{Primary 17B67, 81R10}
\theoremstyle{rem}
\numberwithin{equation}{section}
\begin{document}
\title{Realizations of the Three Point Lie Algebra $\mathfrak{sl}(2, \mathcal R)  \oplus\left( \Omega_{\mathcal R}/d{\mathcal R}\right)$.}
\author{Ben Cox}
\begin{abstract}  We describe the universal central extension of the three point current algebra $\mathfrak{sl}(2,\mathcal  R)$ where $\mathcal R=\mathbb C[t,t^{-1},u\,|\,u^2=t^2+4t ]$ and construct realizations of it in terms of sums of partial differential operators.
\end{abstract}
\keywords{Wakimoto Modules,  Three Point Algebras, Affine Lie 
Algebras, Fock Spaces}
\address{Department of Mathematics \\
The College of Charleston \\
66 George Street  \\
Charleston SC 29424, USA}\email{coxbl@cofc.edu}
\urladdr{http://coxbl.people.cofc.edu/papers/preprints.html}
\author{Elizabeth Jurisich}
\address{Department of Mathematics,
The College of Charleston,
Charleston SC 29424}
\email{jurisiche@cofc.edu}

\maketitle
\section{Introduction}  

It is well known from the work of C. Kassel and J.L. Loday (see \cite{MR694130}, and \cite{MR772062}) that if $R$ is a commutative algebra and $\mathfrak g $ is a simple Lie algebra, both defined over the complex numbers, then the universal central extension $\hat{{\mathfrak g}}$ of ${\mathfrak g}\otimes R$ is the vector space $\left({\mathfrak g}\otimes R\right)\oplus \Omega_R^1/dR$ where $\Omega_R^1/dR$ is the space of K\"ahler differentials modulo exact forms (see \cite{MR772062}).  The vector space $\hat{{\mathfrak g}}$ is made into a Lie algebra by defining
$$
[x\otimes f,y\otimes g]:=[xy]\otimes fg+(x,y)\overline{fdg},\quad [x\otimes f,\omega]=0
$$
for $x,y\in\mathfrak g$, $f,g\in R$,  $\omega\in \Omega_R^1/dR$ and $(-,-)$ denotes the Killing form on $\mathfrak g$.  Here $\overline{a}$ denotes the image of $a\in\Omega^1_R$ in the quotient $\Omega^1_R/dR$.      A somewhat vague but natural question comes to mind
is whether there exists free field or Wakimoto type realizations of these algebras.  It is well known from the work of M. Wakimoto and B. Feigin and E. Frenkel what the answer is when $R$ is the ring of Laurent polynomials in one variable (see \cite{W} and \cite{MR92f:17026}).  We find such a realization in the setting where $\mathfrak g=\mathfrak{sl}(2,\mathbb C)$ and $R=\mathbb C[t,t^{-1},u|u^2=t^2+4t]$ is the three point algebra.

   In Kazhdan and Luszig's explicit study of the tensor structure of modules for affine Lie algebras (see \cite{MR1186962} and \cite{MR1104840}) the ring of functions regular everywhere except at a finite number of points appears naturally.   This algebra M. Bremner gave the name {\it $n$-point algebra}.  In particular in the monograph  \cite[Ch. 12]{MR1849359} algebras of the form $\oplus _{i=1}^n\mathfrak g((t-x_i))\oplus\mathbb Cc$ appear in the description of the conformal blocks.  These contain the $n$-point algebras $ \mathfrak g\otimes \mathbb C[(t-x_1)^{-1},\dots, (t-x_N)^{-1}]\oplus\mathbb Cc$ modulo part of the center $\Omega_R/dR$.   M. Bremner explicitly described the universal central extension of such an algebra in \cite{MR1261553}.  
   
   Consider now the Riemann sphere $\mathbb  C\cup\{\infty\}$ with coordinate function $s$ and fix three distinct points $a_1,a_2,a_3$ on this Riemann sphere.    Let $R$ denote the ring of rational functions with poles only in the set $\{a_1,a_2,a_3\}$.  It is known that the automorphism group $PGL_2(\mathbb C)$ of $\mathbb C(s)$ is simply 3-transitive and $R$ is a subring of $\mathbb C(s)$, so that $R$ is isomorphic to the ring of rational functions with poles at $\{\infty,0,1,a\}$.  Motivated by this isomorphism one sets $a=a_4$ and here the {\it $4$-point ring} is $R=R_a=\mathbb C[s,s^{-1},(s-1)^{-1},(s-a)^{-1}]$ where $a\in\mathbb C\backslash\{0,1\}$.    Let $S:=S_b=\mathbb C[t,t^{-1},u]$ where $u^2=t^2-2bt+1$ with $b$ a complex number not equal to $\pm 1$.  Then M. Bremner has shown us that $R_a\cong S_b$. 
As the later, being $\mathbb Z_2$-graded, is a cousin to super Lie algebras, and  is thus more immediately amendable to the theatrics of conformal field theory.  Moreover Bremner has given an explicit  description of the universal central extension of $\mathfrak g\otimes R$, in terms of ultraspherical (Gegenbauer) polynomials where $R$ is the four point algebra (see \cite{MR1249871}).      In \cite{MR2373448} the first author gave a realization for the four point algebra where the center acts nontrivially. 

In his study of the elliptic affine Lie algebras, $\mathfrak{sl}(2, R)  \oplus\left( \Omega_R/dR\right)$ where $R=\mathbb C[x,x^{-1},y\,|\,y^2=4x^3-g_2x-g_3]$, M. Bremner has also explicitly described the universal central extension of this algebra in terms of Pollaczek polynomials (see  \cite{MR1303073}).   Essentially the same algebras appear in recent work of A. Fialowski and M. Schlichenmaier \cite{FailS} and \cite{MR2183958}.  Together with Andr\'e Bueno and Vyacheslav Futorny, the first author described free field type realizations of the elliptic Lie algebra where $R=\mathbb C[t,t^{-1},u\,|, u^2=t^3-2bt^2-t]$, $b\neq \pm 1$ (see \cite{MR2541818}).

 Below we look at the three point algebra case where $R$ denotes the ring of rational functions with poles only in the set $\{a_1,a_2,a_3\}$. This algebra is isomorphic to $\mathbb C[s,s^{-1},(s-1)^{-1}]$. M. Schlichenmaier has a slightly different description of the three point algebra as $\mathbb C[(z^2-a^2)^k,z(z^2-a^2)^k\,|\, k\in\mathbb Z]$ where $a\neq 0$ (see \cite{MR2058804}). 
We show that $R\cong \mathbb C[t,t^{-1},u\,|\,u^2=t^2+4t]$ and thus looks more like $S_b$ above.
Our main result \thmref{mainthm} provides a natural free field realization in terms of a $\beta$-$\gamma$-system and the oscillator algebra of the three point affine Lie algebra when $\mathfrak g=\mathfrak{sl}(2,\mathbb C)$.   Just as in the case of intermediate Wakimoto modules defined in
\cite{ MR2271362}, there are two different realizations depending on two different normal orderings.  Besides M. Bermner's article mentioned above, other work on the universal central extension of $3$-point algebras can be found in \cite{MR2286073}. Previous related work on highest weight modules of $\mathfrak{sl}(2,R)$ can be found in H. P. Jakobsen and V. Kac \cite{JK}.

The three point algebra is perhaps the simplest non-trivial example of a Krichever-Novikov algebra beyond an affine Kac-Moody algebra 
 (see \cite{MR902293}, \cite{MR925072}, \cite{MR998426}).  A fair amount of interesting and 
fundamental work has be done by Krichever, Novikov, Schlichenmaier, and Sheinman on the representation theory of the Krichever-Novikov algebras. 
 In particular Wess-Zumino-Witten-Novikov theory and analogues of the Knizhnik-Zamolodchikov equations are developed for  these algebras 
(see the survey article \cite{MR2152962}, and for example \cite{MR1706819}, \cite{MR1706819},\cite{MR2072650},\cite{MR2058804},\cite{MR1989644}, and \cite{MR1666274}). 

The initial motivation for the use of Wakimoto's realization was to prove a conjecture of V. Kac and D. Kazhdan on the character of certain irreducible representations of affine Kac-Moody algebras at the critical level (see \cite{W} and \cite{MR2146349}). Another motivation for constructing free field realizations is that they are used to provide integral solutions to the KZ-equations (see for example \cite{MR1077959} and \cite{MR1629472} and their references).  A third is that they are used to help in determining the center of a certain completion of the enveloping algebra of an affine Lie algebra at the critical level which is an important ingredient in the geometric Langland's correspondence \cite{MR2332156}.  Yet a fourth is that free field realizations of an affine Lie algebra appear naturally in the context of the generalized AKNS hierarchies \cite{MR1729358}.

The authors would like to thank Murray Bremner for bringing this problem to their attention and they would also like to thank the College of Charleston for their summer Research and Development Grant which supported this work.    We would also like to thank Renato Martins for helpful commentary. 

\dedicatory{This paper is dedicated to Robert Wilson.}

\section{The $3$-point ring.}
The three point algebra has at least four incarnations. 
\subsection{Three point algebras}  Fix $0\neq a\in\mathbb C$.
Let \begin{align*}
\mathcal S&:=\mathbb C[s,s^{-1},(s-1)^{-1} ] ,\\
 \mathcal R&:=\mathbb C[t,t^{-1},u\,|\, u^2=t^2+4t], \\
 \mathcal A&:=\mathcal A_a=\mathbb C[(z^2-a^2)^k,z(z^2-a^2)^k\,|\,k\in\mathbb Z].
 \end{align*}
 Note that Bremner introduced the ring $\mathcal S$ and Schichenmaier introduced $\mathcal A$ (see \cite{MR2058804}).  Variants of $\mathcal R$ were introduced by Bremner for elliptic and $3$-point algebras.
\begin{prop} 
\begin{enumerate}
\item The rings $\mathcal R$ and $\mathcal S$ are isomorphic whereby $t\mapsto s^{-1}(s-1)^2$, and $u\mapsto s-s^{-1}$.
\item The rings $\mathcal R$ and $\mathcal A$ are isomorphic.
\end{enumerate}
\end{prop}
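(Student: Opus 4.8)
The plan is to prove both isomorphisms by exhibiting explicit ring homomorphisms and checking that the defining relations are preserved, then verifying bijectivity. For part (1), I would start by taking the proposed assignment $t\mapsto s^{-1}(s-1)^2$ and $u\mapsto s-s^{-1}$ and first check that this extends to a well-defined homomorphism $\varphi\colon\mathcal R\to\mathcal S$: the only relation to verify is $u^2=t^2+4t$, so I would compute $(s-s^{-1})^2 = s^2 - 2 + s^{-2}$ and $\bigl(s^{-1}(s-1)^2\bigr)^2 + 4s^{-1}(s-1)^2 = s^{-2}(s-1)^4 + 4s^{-1}(s-1)^2$, and check these agree (after clearing denominators this is a polynomial identity in $s$). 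One also needs $t\mapsto s^{-1}(s-1)^2$ to land in the units-adjoined ring appropriately; since $s$ and $s-1$ are invertible in $\mathcal S$ but $(s-1)^2$ is not forced to be, I must confirm $t$ need not be inverted — but $\mathcal R$ only inverts $t$, so I also need $t^{-1}=s(s-1)^{-2}\in\mathcal S$, which indeed lies in $\mathcal S$. So $\varphi$ is well-defined.

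Next I would construct the inverse. Observe that in $\mathcal S$ we want to recover $s$ from $t$ and $u$. From $u = s - s^{-1}$ we get $s^2 - us - 1 = 0$, and combined with $t = s^{-1}(s-1)^2 = s - 2 + s^{-1}$, we get $t = (s+s^{-1}) - 2$, so $s + s^{-1} = t+2$ while $s - s^{-1} = u$; hence $s = \tfrac12\bigl((t+2)+u\bigr)$ and $s^{-1} = \tfrac12\bigl((t+2)-u\bigr)$. This suggests the inverse map $\psi\colon\mathcal S\to\mathcal R$ sending $s\mapsto \tfrac12(t+2+u)$. I would then check $\psi$ is well-defined: $s$ must go to a unit, and indeed $\tfrac12(t+2+u)\cdot\tfrac12(t+2-u) = \tfrac14\bigl((t+2)^2 - u^2\bigr) = \tfrac14\bigl(t^2+4t+4 - (t^2+4t)\bigr) = 1$, so $\tfrac12(t+2+u)$ is a unit with the expected inverse; similarly $s-1 = \tfrac12(t+u)$ must be a unit, and $\tfrac12(t+u)\cdot\tfrac12(t-u)\cdot\text{(something)}$... here one checks $(t+u)(t-u) = t^2-u^2 = t^2 - (t^2+4t) = -4t$, which is a unit in $\mathcal R$, so $t+u$ is a unit. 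Thus $\psi$ is well-defined, and a direct check shows $\varphi\circ\psi$ and $\psi\circ\varphi$ are the identity on generators, proving (1).

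For part (2), the cleanest route is to compose with a known or easily established isomorphism. I would show $\mathcal A\cong\mathcal S$ (or $\mathcal A\cong\mathcal R$ directly). Setting $z$ as the coordinate, note $\mathcal A = \mathbb C[(z^2-a^2)^k, z(z^2-a^2)^k \mid k\in\mathbb Z]$ is the ring of functions on $\mathbb P^1$ regular away from $z = a$, $z=-a$, and $z=\infty$. Schlichenmaier's ring is designed precisely as the three-point ring for the points $\{a,-a,\infty\}$, while $\mathcal S$ is the three-point ring for $\{0,1,\infty\}$. Since $PGL_2(\mathbb C)$ acts simply $3$-transitively on $\mathbb P^1$ (as recalled in the introduction), there is a Möbius transformation carrying $\{a,-a,\infty\}$ to $\{0,1,\infty\}$, inducing a ring isomorphism $\mathcal A\cong\mathcal S$; composing with the isomorphism of part (1) gives $\mathcal A\cong\mathcal R$. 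Alternatively, and more explicitly, I would produce generators: set $x = (z^2-a^2)^{-1}$ type elements and match them against $s, s^{-1}, (s-1)^{-1}$, or directly hunt for elements of $\mathcal A$ satisfying $u^2 = t^2+4t$. The main obstacle — really the only subtle point — is bookkeeping: making sure each proposed generator actually lies in the target ring (the invertibility checks above) and that no hidden relations are missed, i.e., that the rings are genuinely generated by the claimed elements with no further relations beyond the single quadratic one. This is handled by the Möbius-transformation viewpoint, which identifies all three rings abstractly as $H^0$ of the structure sheaf on $\mathbb P^1$ minus three points, so the explicit formulas are then just a matter of choosing coordinates.
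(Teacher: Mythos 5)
Your proposal is correct and follows essentially the same route as the paper: for (1) the paper likewise checks that $t\mapsto s^{-1}(s-1)^2=s-2+s^{-1}$, $u\mapsto s-s^{-1}$ preserves the relation $u^2=t^2+4t$ and sends $t$ to a unit, and then exhibits the inverse $s\mapsto\tfrac{1}{2}(t+2+u)$, $s^{-1}\mapsto\tfrac{1}{2}(t+2-u)$ with $(s-1)^{-1}\mapsto\tfrac{1}{2}(t^{-1}u-1)$, exactly as you do (your unit check $(t+u)(t-u)=-4t$ is, if anything, slightly more explicit than the paper's). For (2) the paper realizes your M\"obius-transformation idea concretely by rewriting $\mathcal A=\mathbb C[z,(z-a)^{-1},(z+a)^{-1}]$ and substituting $z=2as-a$, which is precisely the coordinate change carrying $\{-a,a,\infty\}$ to $\{0,1,\infty\}$ that your argument invokes.
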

\begin{proof} 
(1). Let $\bar f: \mathbb C[t,u] \rightarrow \mathcal S$ be the ring homomorphism defined $\bar f(t)=s^{-1}(s-1)^2=s-2+s^{-1}$, $\bar f(u)= s-s^{-1}$.

 We first check that 
$$
\bar f(u^2-(t^2+4t))=(s-s^{-1})^2-(s-2+s^{-1})^2-4(s-2+s^{-1})=0
$$ 
and $\bar f(t)=s^{-1}(s-1)^2$ is invertible in $\mathcal S$.  Hence the map $\bar f$ descends to a well defined ring homomorphism $f:\mathcal R\to \mathcal S$.  To show that it is onto we essentially solve for $s$ and $s^{-1}$ in terms of $t$ and $u$.  The inverse ring homomorphism of $f$ is $\phi:\mathcal S\to \mathcal R$ given by 
$$
\phi(s)=\frac{t+2+u}{2},\quad \phi(s^{-1})=\frac{t+2-u}{2}.
$$
In particular $\displaystyle{\phi((s-1)^{-1})=\frac{t^{-1}u-1}{2}}$.

For part (2) observe $\mathcal A=\mathbb C[z,(z-a)^{-1},(z+a)^{-1}]$ which after setting $z=2as-a$ we get $\mathcal A=\mathbb C[s,s^{-1},(s-1)^{-1}]$.    Thus an isomorphism between $\mathcal A$ and $\mathcal R$ is implemented by the assignment $z\mapsto a(t+u)$, $(z+a)^{-1}\mapsto\displaystyle{\frac{t+2-u}{4a}}$ and $(z-a)^{-1}\mapsto   \displaystyle{ \frac{t^{-1}u-1}{4a}}$.  \end{proof}

\subsection{The Universal Central Extension of the Current Algebra $\mathfrak g\otimes \mathcal A$.}
 Let $R$ be a commutative algebra defined over $\mathbb C$.
Consider the left $R$-module  $F=R\otimes R$ with left action given by $f( g\otimes h ) = f g\otimes h$ for $f,g,h\in R$ and let $K$  be the submodule generated by the elements $1\otimes fg  -f \otimes g -g\otimes f$.   Then $\Omega_R^1=F/K$ is the module of {\it K\"ahler differentials}.  The element $f\otimes g+K$ is traditionally denoted by $fdg$.  The canonical map $d:R\to \Omega_R^1$ is given by $df  = 1\otimes f  + K$.  The {\it exact differentials} are the elements of the subspace $dR$.  The coset  of $fdg$  modulo $dR$ is denoted by $\overline{fdg}$.  As C. Kassel showed the universal central extension of the current algebra $\mathfrak g\otimes R$ where $\mathfrak g$ is a simple finite dimensional Lie algebra defined over $\mathbb C$, is the vector space $\hat{\mathfrak g}=(\mathfrak g\otimes R)\oplus \Omega_R^1/dR$ with Lie bracket given by
$$
[x\otimes f,Y\otimes g]=[xy]\otimes fg+(x,y)\overline{fdg},  [x\otimes f,\omega]=0,  [\omega,\omega']=0,
$$
  where $x,y\in\mathfrak g$, and $\omega,\omega'\in \Omega_R^1/dR$ and $(x,y)$  denotes the Killing  form  on $\mathfrak g$.  
  
There are at least four incarnations of the three point algebras, three of which are defined as $\mathfrak g\otimes R\oplus \Omega_R/dR$ where $R=\mathcal S, \mathcal R,\mathcal A$ given above.  The forth incarnation appears in the work of G. Benkart and P. Terwilliger given in terms of the tetrahedron algebra (see \cite{MR2286073}) . 
We will only work with $R=\mathcal R$.  

\begin{prop}[\cite{MR1261553}, see also \cite{MR1249871}]\label{uce}  Let $\mathcal R$ be as above.  The set 
$$
\{\omega_0:=\overline{t^{-1} dt},\enspace \omega_1:=\overline{t^{-1}u\,dt}\}
$$
 is a basis of $\Omega_\mathcal R^1/d\mathcal R$.
\end{prop}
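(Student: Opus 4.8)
The plan is to work directly with the presentation $\mathcal R = \mathbb C[t,t^{-1},u\,|\,u^2 = t^2+4t]$ and to exploit the $\mathbb Z_2$-grading in which $t$ has degree $0$ and $u$ has degree $1$. As a $\mathbb C[t,t^{-1}]$-module, $\mathcal R$ is free of rank two with basis $\{1,u\}$, so every element of $\mathcal R$ is uniquely $p(t) + q(t)u$ with $p,q \in \mathbb C[t,t^{-1}]$. First I would set up $\Omega^1_{\mathcal R}$ concretely: it is generated as an $\mathcal R$-module by $dt$ and $du$, subject to the relation obtained by applying $d$ to $u^2 = t^2+4t$, namely $2u\,du = (2t+4)\,dt$. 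Using invertibility of $t$ (hence of $2$) this lets me solve $u\,du = (t+2)\,dt$, so that $du$ and $dt$ together with the module structure produce a spanning set for $\Omega^1_{\mathcal R}$ consisting of the four families $t^k\,dt$, $t^k u\,dt$, $t^k\,du$, $t^k u\,du$ for $k \in \mathbb Z$; the relation $u\,du = (t+2)\,dt$ rewrites $t^k u\,du$ in terms of $t^{k}\,dt$ and $t^{k+1}\,dt$, and then multiplying $u\,du=(t+2)dt$ by $u$ and using $u^2=t^2+4t$ rewrites $t^k u^2\,du = t^k(t^2+4t)du$, i.e. $t^{k+2}du + 4t^{k+1}du = t^k(t+2)u\,dt$, which expresses $t^j\,du$ (for all $j$, by induction up and down using invertibility of $t$) in terms of the $t^k\,dt$ and $t^k u\,dt$ families. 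So $\{t^k\,dt, t^k u\,dt : k \in \mathbb Z\}$ spans $\Omega^1_{\mathcal R}$.

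Next I would pass to the quotient $\Omega^1_{\mathcal R}/d\mathcal R$ and kill as much of this spanning set as possible using exact forms. Since $d(t^{k+1}) = (k+1)t^k\,dt$, every $t^k\,dt$ with $k \neq -1$ is exact, leaving only $\overline{t^{-1}\,dt} = \omega_0$ from the first family. For the second family, compute $d(t^k u)$ for $k \in \mathbb Z$: $d(t^k u) = k t^{k-1} u\, dt + t^k\,du$, and substituting the expression for $t^k\,du$ in terms of $dt$-forms and $u\,dt$-forms obtained above yields a relation modulo $d\mathcal R$ among $t^{k-1}u\,dt$, $t^{k-1}u\,dt$ (shifted indices) and the $t^j\,dt$ family — concretely, one finds a two- or three-term recursion that, combined with invertibility of $t$, lets me reduce every $\overline{t^k u\,dt}$ to a multiple of a single one, which I would normalize to $\omega_1 = \overline{t^{-1}u\,dt}$; the $t^j\,dt$ terms appearing in these relations are all exact except possibly the $j=-1$ term, which only reinforces that $\omega_0$ survives. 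This shows $\{\omega_0,\omega_1\}$ spans $\Omega^1_{\mathcal R}/d\mathcal R$.

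It remains to prove linear independence of $\omega_0$ and $\omega_1$, i.e. that no nontrivial combination $\alpha\,t^{-1}\,dt + \beta\,t^{-1}u\,dt$ is exact. The clean way is to exhibit a residue-type linear functional: using Proposition (1) of the previous section, transport everything to $\mathcal S = \mathbb C[s,s^{-1},(s-1)^{-1}]$, where $\Omega^1_{\mathcal S}/d\mathcal S$ is classically understood — it is spanned by the residues at $s=0$ and $s=1$ (the residue at $\infty$ being minus their sum), and one checks that $\omega_0$ and $\omega_1$ map to forms whose residue pair $(\mathrm{Res}_0,\mathrm{Res}_1)$ is linearly independent. Alternatively, and more self-contained, I would define two linear maps $\Omega^1_{\mathcal R} \to \mathbb C$ by writing a general one-form via the basis $\{1,u\}$ over $\mathbb C[t,t^{-1}]$ as $(f_0(t)+f_1(t)u)\,dt + (g_0(t)+g_1(t)u)\,du$, rewriting $du$ through $u\,du=(t+2)dt$, and extracting the coefficient of $t^{-1}\,dt$ and of $t^{-1}u\,dt$ after full reduction; the point is then to verify these coefficients vanish on every exact form $d(t^m)$ and $d(t^m u)$ — a finite check given the explicit recursions — while being $(1,0)$ and $(0,1)$ on $\omega_0,\omega_1$ respectively. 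The main obstacle is precisely this independence step: getting the bookkeeping of the mixed $u\,du$ and $u^2\,du$ substitutions exactly right so that the "residue" functionals are well-defined on the quotient. Once the residue maps are shown to annihilate $d\mathcal R$, independence is immediate, and combined with the spanning argument the proposition follows.
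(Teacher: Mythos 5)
Your spanning argument is essentially the one in the paper: you derive the same relation $(t^{k+1}+2t^k)u\,dt=(t^{k+2}+4t^{k+1})\,du$ from $u^2=t^2+4t$, combine it with $d(t^ku)=kt^{k-1}u\,dt+t^k\,du\equiv 0$ to obtain the recursion $(k+3)t^{k+1}u\,dt+(4k+6)t^ku\,dt\equiv 0 \bmod d\mathcal R$, and use $d(t^k)$ to kill $t^{k-1}\,dt$ for $k\neq 0$. One intermediate claim is inaccurate, though harmless: $\{t^k\,dt,\ t^ku\,dt\}$ does \emph{not} span $\Omega^1_{\mathcal R}$ itself. The curve $u^2=t(t+4)$ is smooth and both $dt$ and $u\,dt$ vanish at the ramification point $(t,u)=(-4,0)$ while $du$ does not; correspondingly your two-term recursion $t^{k+2}\,du=-4t^{k+1}\,du+(t^{k+1}+2t^k)u\,dt$ has no base case and never eliminates the $du$ family inside $\Omega^1_{\mathcal R}$. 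The elimination of $t^k\,du$ genuinely requires passing to the quotient (exactness of $d(t^ku)$), which is what the paper does and what your treatment of the ``second family'' in effect does anyway, so the spanning conclusion for $\Omega^1_{\mathcal R}/d\mathcal R$ stands.

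Where you genuinely diverge is linear independence. The paper simply invokes the Riemann--Roch dimension count ($\dim\Omega^1_{\mathcal R}/d\mathcal R=2$ for the thrice-punctured sphere, citing Bremner), whereas you propose residue functionals. That route works and is more self-contained: under $t=s^{-1}(s-1)^2$, $u=s-s^{-1}$ one finds
\begin{equation*}
\omega_0=t^{-1}\,dt=-\frac{ds}{s}+\frac{2\,ds}{s-1},\qquad \omega_1=t^{-1}u\,dt=\frac{(s+1)^2}{s^2}\,ds,
\end{equation*}
so the residue pairs $(\mathrm{Res}_{s=0},\mathrm{Res}_{s=1})$ are $(-1,2)$ and $(2,0)$, which are linearly independent, while $\mathrm{Res}_0$ and $\mathrm{Res}_1$ annihilate $d\mathcal S$ because every element of $\mathcal S$ is a rational function. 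Your approach buys an explicit dual basis for the two cocycles at the cost of the change of variables; the paper's appeal to the known dimension is shorter but leaves the independence unverified by hand. Do carry out the residue computation rather than leaving it as ``one checks'' --- it is the only part of your argument not already forced by the recursions.
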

\begin{proof}  The proof follows almost exactly along the lines of \cite{MR1249871} and \cite{MR1261553} and would be omitted if not for the fact that we need some of the formulae that appear in the proof.  As $\mathcal R$ has basis $\{t^k,t^lu\,|\,k,l\in\mathbb Z\}$, $\Omega_{\mathcal R}$ has a spanning set given by the image in the quotient $F/K$ of the tensor product of these basis elements.  We know $\frac{1}{2}u\,d(u^2)=u^2\,du$.    Next we observe that since $u^2=t^2+4t$ one has in $\Omega_{\mathcal R}$,
\begin{align*}
u(t+2)\,dt=\frac{1}{2}u(2t+4)\,dt=\frac{1}{2}u\,d(u^2)=u^2\,du=(t^2+4t)\,du
\end{align*}
and after multiplying this on the left by $t^k$ we get
\begin{align}\label{recursion}
(t^{k+1}+2t^k)u\,dt-(t^{k+2}+4t^{k+1})\,du=0
\end{align}
in $\Omega_{\mathcal R}$.
Now
\begin{align}
d(t^k)&=kt^{k-1}\,dt, \notag \\
d(t^ku)&=t^k\enspace du+ kt^{k-1}u\,dt, \label{eqn1}
\end{align}
so that in the quotient $\Omega_{\mathcal R}/d\mathcal R$ we have by \eqnref{recursion} followed by \eqnref{eqn1}
\begin{align} 
0&\equiv (t^{k+1}+2t^k)u\,dt-(t^{k+2}+4t^{k+1})\,du\notag \\
&\equiv (t^{k+1}+2t^k)u\,dt-(-(k+2)t^{k+1} -4(k+1)t^{k} )u\,dt \notag\\
&\equiv  \left((k+3)t^{k+1}+(4k+6)t^k\right)u\,dt. \notag
\end{align}
Then 
\begin{equation}
t^k u\,dt\equiv -\frac{(k+3)}{(4k+6)}t^{k+1}u\, dt \mod d\mathcal R\label{eqn2}
\end{equation}
so that 
$$
t^{-k} u\,dt\equiv t^{-3} u\,dt\equiv 0\mod d\mathcal R,\quad k\geq 3,
$$
and 
\begin{equation}
t^{k+1}u\,dt\equiv - \frac{(4k+6)}{(k+3)}t^k u\,dt\mod   d\mathcal R,  \label{eqn2.5}
\end{equation}
so that $t^{k}u\,dt$ can be written in terms of $t^{-1}u\,dt$ for $k\geq -2$ modulo $d\mathcal R$. 
Thus $\Omega_{\mathcal R}$ is spanned as a left $\mathcal R$-module by $dt $ and $du$, furthermore 
\begin{align}
t^{k-1}\,dt&\equiv \frac{1}{k}d(t^k)\equiv 0\mod d\mathcal R, \text{ for }k\neq 0\label{eqn3}
\end{align}
By equations \eqnref{eqn1}, \eqnref{eqn2}, and \eqnref{eqn3} we have  $\Omega_{\mathcal R}/d\mathcal R$ is spanned by  $\{\overline{t^{-1}\,dt},\overline{t^{-1}u\,dt}\}$.
We know by the Riemann-Roch Theorem that the dimension of this space of K\"ahler differentials modulo exact forms on the sphere with three punctures has dimension 2 (see \cite{MR1261553}).  This completes the proof of the proposition.
\end{proof}
Note by \eqnref{eqn2} and \eqnref{eqn2.5}
\begin{equation}
\overline{t^k u\,dt}= \frac{(-1)^{k+1}2^{k+1}(2k+1)!!}{(k+2)!}\overline{t^{-1}u\, dt}
\end{equation}
where this gives us $0$ if $k\leq -3$. 
\begin{cor} In $\Omega_R^1/dR$, one has
\begin{align}
\label{recursionreln}
\overline{t^k\,dt^l}&=-k\delta_{l,-k}\omega_0 , \\
 \overline{t^ku\,d(t^lu)}&=\left((l+1)\delta_{k+l,-2}+(4l +2)\delta_{k+l,-1}\right)\omega_0\\  
\overline{t^k\,d(t^lu)}&= \mu_{k,l}\omega_1
\end{align}
where $$
\mu_{k,l}:= -k\frac{(-1)^{k+l}2^{k+l}(2(k+l)-1)!!}{(k+l+1)!}.
$$

\end{cor}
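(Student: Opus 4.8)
The plan is to reduce each identity in \eqnref{recursionreln} to the formulae already collected in the proof of \propref{uce}, namely \eqnref{eqn1}, \eqnref{eqn2} and \eqnref{eqn3}, together with the defining relation $u^2=t^2+4t$ (equivalently $2u\,du=(2t+4)\,dt$ in $\Omega^1_{\mathcal R}$).

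For the first identity, since $d(t^l)=lt^{l-1}\,dt$ one has $t^k\,dt^l=lt^{k+l-1}\,dt$ in $\Omega^1_{\mathcal R}$; passing to $\Omega^1_{\mathcal R}/d\mathcal R$, formula \eqnref{eqn3} gives $\overline{t^{m}\,dt}=0$ for $m\neq -1$ and $\overline{t^{-1}\,dt}=\omega_0$, and since $l=-k$ on the surviving term, $l\omega_0=-k\omega_0$, which is $-k\delta_{l,-k}\omega_0$.

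For the middle identity I would expand by \eqnref{eqn1}, $t^ku\,d(t^lu)=t^{k+l}u\,du+lt^{k+l-1}u^2\,dt$, and then remove both $u\,du$ and $u^2$ using the ring relation: $u\,du=(t+2)\,dt$ and $u^2\,dt=(t^2+4t)\,dt$, so that $t^ku\,d(t^lu)=(l+1)t^{k+l+1}\,dt+(4l+2)t^{k+l}\,dt$ in $\Omega^1_{\mathcal R}$. One more application of \eqnref{eqn3} then extracts precisely the Kronecker deltas at $k+l=-2$ and $k+l=-1$ with the stated coefficients of $\omega_0$.

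For the last identity I would expand $t^k\,d(t^lu)=t^{k+l}\,du+lt^{k+l-1}u\,dt$ by \eqnref{eqn1}; since $d(t^{k+l}u)=t^{k+l}\,du+(k+l)t^{k+l-1}u\,dt$ is exact, we have $\overline{t^{k+l}\,du}=-(k+l)\overline{t^{k+l-1}u\,dt}$, which leaves $\overline{t^k\,d(t^lu)}=-k\,\overline{t^{k+l-1}u\,dt}$; it then remains to rewrite $\overline{t^{k+l-1}u\,dt}$ as a multiple of $\omega_1=\overline{t^{-1}u\,dt}$ by iterating \eqnref{eqn2}, exactly as in \propref{uce}. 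I expect this last reduction to be the main obstacle: \eqnref{eqn2} degenerates at the exponent $-3$, where the coefficient $k+3$ vanishes — this is exactly what forces $\overline{t^{j}u\,dt}\equiv 0$ for $j\le -3$ — while for $j\ge -2$ the reduction to $\omega_1$ carries nonzero rational coefficients, so one must keep careful track of which pairs $(k,l)$ actually contribute and verify that the surviving coefficient matches $-k\delta_{k,-l}\omega_1$. By contrast, the first two identities are governed entirely by \eqnref{eqn3} and amount to short direct computations.
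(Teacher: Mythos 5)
Your handling of the first two identities is correct and is essentially the paper's own argument: expand with \eqnref{eqn1}, eliminate $u\,du$ and $u^2\,dt$ via $u\,du=(t+2)\,dt$ and $u^2=t^2+4t$, and finish with \eqnref{eqn3}; those two formulae are governed entirely by the exactness of $t^m\,dt$ for $m\neq -1$ and come out exactly as you describe.

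The gap is in the third identity, and your instinct about where the trouble lies is exactly right. After the reduction $\overline{t^k\,d(t^lu)}=-k\,\overline{t^{\,k+l-1}u\,dt}$ (the paper performs the same reduction), one must still evaluate $\overline{t^{\,j}u\,dt}$ for $j=k+l-1$. The paper's proof at this point replaces $t^{\,j}u\,dt$ by $\delta_{j,-1}\,t^{-1}u\,dt$, i.e.\ asserts $\overline{t^{\,j}u\,dt}=0$ for all $j\neq -1$. But, as you observe, \eqnref{eqn2} only forces vanishing for $j\le -3$; for $j\ge -2$ iterating \eqnref{eqn2} produces a \emph{nonzero} multiple of $\omega_1$ (for instance $\overline{u\,dt}=-\omega_1$ and $\overline{t^{-2}u\,dt}=\tfrac12\omega_1$). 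Hence the verification you defer cannot succeed in the stated generality: for $(k,l)=(1,0)$ one gets $\overline{t\,du}=-\overline{u\,dt}=\omega_1$, whereas the right-hand side of the corollary is $-\delta_{1,0}\omega_1=0$. (A residue computation under $t=s^{-1}(s-1)^2$, $u=s-s^{-1}$ confirms that $t\,du$ and $t^{-1}u\,dt$ have the same residues $2,0,-2$ at $s=0,1,\infty$.) So your proposal agrees with the paper's proof up to its final step, correctly flags that step as the obstacle, but leaves the third identity unproven; carrying the reduction out shows it holds only when $k=0$, or $k+l=0$, or $k+l\le -2$, and in general the coefficient is $-k\,c_{k+l-1}$ where $c_j$ is the scalar with $\overline{t^{\,j}u\,dt}=c_j\omega_1$ obtained from \eqnref{eqn2}.
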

\begin{proof} 
Using \eqnref{eqn1} above we obtain 
\begin{align*}
t^k\,d(t^lu)&\equiv t^k\enspace (lt^{l-1}u\,dt+t^l\,du) \\
&\equiv  lt^{l+k-1}u\,dt+t^{l+k}\,du \\
&\equiv  lt^{l+k-1}u\,dt-(l+k)t^{l+k-1}u\,dt \\ 
&\equiv  -kt^{l+k-1}u\,dt   \\
&\equiv -k\frac{(-1)^{k+l}2^{k+l}(2(k+l)-1)!!}{(k+l+1)!} t^{-1}u\, dt\mod dR  \\
\end{align*}
in $\Omega_{\mathcal R}/d\mathcal R$.  

Next we observe 
$\displaystyle{
u\,du=\frac{1}{2}\,d(u^2)=\frac{1}{2}\,d(t^2+4t)=(t+2)\,dt}$
so that in $\Omega_{\mathcal R}$, 
\begin{equation}
t^ku\,du=(t^{k+1}+2t^k)\,dt\label{udu}.
\end{equation}
By \eqnref{udu} and \eqnref{eqn3}
\begin{align*}
t^ku\,d(t^lu)&= t^ku(lt^{l-1}u\,dt+t^l\,du) \hskip 20pt \text{ in }\Omega_{\mathcal R} \\
&= (lt^{l+k-1}u^2\,dt+t^{l+k}u\,du) \\ 
&= (lt^{l+k-1}(t^2+4t)\,dt+(t^{l+k+1}+2t^{l+k})\,dt) \\ 
&=  l (t^{k+l+1}+4t^{k+l})\,dt+(t^{l+k+1}+2t^{l+k})\,dt) \\ 
&= (l+1) t^{k+l+1}\,dt+(4l+2)t^{k+l}\,dt  \\ 
&\equiv\left((l+1)\delta_{k+l,-2}+(4l +2)\delta_{k+l,-1}\right)t^{-1}\,dt \mod
\mathcal R.
\end{align*}
This completes the proof of the corollary.
\end{proof}

\begin{thm}\label{3ptthm}
The universal central extension of the algebra $\mathfrak{sl}(2,\mathbb C)\otimes \mathcal R$ is isomorphic to the Lie algebra with generators $e_n$, $e_n^1$, $f_n$, $f_n^1$, $h_n$, $h_n^1$, $n\in\mathbb Z$, $\omega_0$, $\omega_1$ and relations given by
\begin{align}
[x_m,x_n]&:=[x_m,x_n^1]=[x_m^1,x_n^1]=0,\quad \text{ for }x=e,f\label{xs}\\
[h_m,h_n]&:=-2m\delta_{m,-n}\omega_0=(n-m)\delta_{m,-n}\omega_0 , \\
[h^1_m,h^1_n]&:=2\left((n+1)\delta_{m+n,-2}+(4n +2)\delta_{m+n,-1}\right)\omega_0 \\
&=(n-m)\left(\delta_{m+n,-2}+4\delta_{m+n,-1}\right)\omega_0,\notag \\
[h_m,h_n^1]&:=-2\mu_{m,n}\omega_1 ,\\
 [\omega_i,x_m]&=[\omega_i,\omega_j]=0,\quad \text{ for }x=e,f,h,\quad i,j\in\{0,1\} \label{omega1} \\ 
[e_m,f_n]&=h_{m+n}-m\delta_{m,-n}\omega_0, \label{ef}\\
[e_m,f_n^1]&=h^1_{m+n}-\mu_{m,n}\omega_1=:[e_m^1,f_n], \\
[e_m^1,f_n^1]&:=h_{m+n+2}+4h_{m+n+1}+\left((n+1)\delta_{m+n,-2}+(4n +2)\delta_{m+n,-1}\right)\omega_0  \\
 &=h_{m+n+2}+4h_{m+n+1}+\frac{1}{2}(n-m)\left(\delta_{m+n,-2}+4\delta_{m+n,-1}\right)\omega_0,\notag  \\
[h_m,e_n]&:=2e_{m+n}, \\
[h_m,e^1_n]&:=2e^1_{m+n} = :[h_m^1,e_m],  \\
[h_m^1,e_n^1]&:=2e_{m+n+2} +8e_{m+n+1},\label{he}\\
[h_m,f_n]&:=-2f_{m+n}, \\
[h_m,f^1_n]&:=-2f^1_{m+n} =:[h_m^1,f_m], \\
[h_m^1,f_n^1]&:=-2f_{m+n+2} -8f_{m+n+1} ,\label{last}
\end{align}
for all $m,n\in\mathbb Z$.
\end{thm}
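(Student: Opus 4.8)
The plan is to realize the universal central extension through Kassel's model $\hat{\mathfrak g}=\big(\mathfrak{sl}(2,\mathbb C)\otimes\mathcal R\big)\oplus\big(\Omega^1_{\mathcal R}/d\mathcal R\big)$ recalled above, choose a basis adapted to the structure of $\mathcal R$, and read off the multiplication table. First I would fix the standard triple $e,f,h$ of $\mathfrak{sl}(2,\mathbb C)$, normalized so that the invariant form satisfies $(h,h)=2$, $(e,f)=(f,e)=1$ and all other pairings vanish; use the $\mathbb C$-basis $\{t^k,\,t^ku\mid k\in\mathbb Z\}$ of $\mathcal R$, which is a basis because $u^2=t^2+4t$ exhibits $\mathcal R$ as a free $\mathbb C[t,t^{-1}]$-module on $\{1,u\}$; and use the basis $\{\omega_0,\omega_1\}$ of $\Omega^1_{\mathcal R}/d\mathcal R$ furnished by \propref{uce}. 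Putting $e_n:=e\otimes t^n$, $e_n^1:=e\otimes t^nu$ and likewise for $f$ and $h$, the set $\{e_n,e_n^1,f_n,f_n^1,h_n,h_n^1\mid n\in\mathbb Z\}\cup\{\omega_0,\omega_1\}$ is then a vector-space basis of $\hat{\mathfrak g}$.

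Next I would evaluate the Kassel bracket $[x\otimes a,y\otimes b]=[x,y]\otimes ab+(x,y)\,\overline{a\,db}$ on each pair of these basis vectors. The $\mathfrak g$-valued part is purely formal: $[x,y]$ comes from the $\mathfrak{sl}_2$ relations and the product $ab$ of two elements of $\{t^k,t^ku\}$ is reduced by $u^2=t^2+4t$ — this is exactly what produces the shifts $h_{m+n+2}+4h_{m+n+1}$ and $2e_{m+n+2}+8e_{m+n+1}$ (and their $f$-analogues) in \eqnref{he}, \eqnref{last}. The central part $(x,y)\,\overline{a\,db}$ is nonzero only when $(x,y)\in\{(e,f),(h,h)\}$, and there I would plug in the identities from the corollary to \propref{uce}: $\overline{t^k\,dt^l}=-k\delta_{l,-k}\omega_0$, $\overline{t^ku\,d(t^lu)}=\big((l+1)\delta_{k+l,-2}+(4l+2)\delta_{k+l,-1}\big)\omega_0$ and $\overline{t^k\,d(t^lu)}=-k\delta_{k,-l}\omega_1$, which themselves come from the recursion \eqnref{recursion} together with \eqnref{eqn1}, \eqnref{eqn2}, \eqnref{eqn3}. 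Running over all pairs yields precisely \eqnref{xs}--\eqnref{last}. I expect this bookkeeping of the central terms — deciding for each $a,b$ which Kähler differential $\overline{a\,db}$ is exact and, when it is not, extracting its coefficient against $\omega_0$ or $\omega_1$ from the recursions of \propref{uce} — to be the one place where a real argument, rather than formal manipulation, is needed.

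Finally I would upgrade the multiplication table to a presentation in the usual way. Let $L$ denote the Lie algebra abstractly generated by symbols $e_n,e_n^1,f_n,f_n^1,h_n,h_n^1,\omega_0,\omega_1$ subject to \eqnref{xs}--\eqnref{last}. Since $\hat{\mathfrak g}$ satisfies every one of those relations, there is a Lie algebra homomorphism $\psi\colon L\to\hat{\mathfrak g}$ sending each generator to the basis vector of the same name; as the image contains a basis, $\psi$ is surjective. On the other hand the defining relations already rewrite the bracket of any two generators of $L$ as a $\mathbb C$-linear combination of generators, so an induction on bracket length shows that $L$ is spanned by its generating set $S$; since $\psi$ maps $S$ bijectively onto a basis of $\hat{\mathfrak g}$, any element of $\ker\psi$, written as a finite linear combination of elements of $S$, must have all its coefficients zero. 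Hence $\psi$ is injective, so $L\cong\hat{\mathfrak g}$, which by Kassel's theorem is the universal central extension of $\mathfrak{sl}(2,\mathbb C)\otimes\mathcal R$. This proves the theorem.
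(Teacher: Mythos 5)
Your proposal is correct and follows essentially the same route as the paper: the paper likewise takes the free Lie algebra on these generators modulo the stated relations, maps it onto Kassel's model $(\mathfrak{sl}(2,\mathbb C)\otimes\mathcal R)\oplus(\Omega^1_{\mathcal R}/d\mathcal R)$, and deduces injectivity from the fact that the relations force the span of the generators to be a subalgebra (hence all of the quotient), which the surjection carries bijectively onto a basis. Your explicit bookkeeping of the central terms via the corollary to \propref{uce} is exactly the content the paper delegates to that corollary.
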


\begin{proof}  Let $\mathfrak f$ denote the free Lie algebra with the generators  $e_n$, $e_n^1$, $f_n$, $f_n^1$, $h_n$, $h_n^1$, $n\in\mathbb Z$, $\omega_0$, $\omega_1$ and relations given above \eqnref{xs}-\eqnref{last}.  The map
$\phi:\mathfrak f\to (\mathfrak{sl}(2,\mathbb C)\otimes \mathcal R)\oplus ( \Omega_{\mathcal R}/d\mathcal R)$ given by 
\begin{align*}
\phi(e_n):&=e\otimes t^n,\quad \phi(e_n^1)=e\otimes ut^n, \\
\phi(f_n):&=f\otimes t^n,\quad \phi(f_n^1)=f\otimes ut^n, \\
\phi(h_n):&=h\otimes t^n,\quad \phi(h_n^1)=h\otimes ut^n, \\
\phi(\omega_0):&=\overline{t^{-1}\,dt},\quad \phi(\omega_1)=\overline{t^{-1}u\,dt}, \\
\end{align*}
for $n\in\mathbb Z$ is a surjective Lie algebra homomorphism.  

 Consider the subalgebras $S_+=\langle e_n,e_n^1\,|\,n\in\mathbb Z\rangle$, $S_0=\langle h_n,h_n^1,\omega_0,\omega_1\,|\,n\in\mathbb Z\rangle$
and $S_-=\langle f_n,f_n^1\,|\,n\in\mathbb Z\rangle$ and set $S=S_-+S_0+S_+$.   By \eqnref{xs} -\eqnref{omega1} we have
$$
S_+=\sum_{n\in\mathbb Z}\mathbb Ce_n+\sum_{n\in\mathbb Z}\mathbb Ce_n^1,\quad S_-=\sum_{n\in\mathbb Z}\mathbb C f_n+\sum_{n\in\mathbb Z}\mathbb Cf_n^1,\quad S_0=\sum_{n\in\mathbb Z}\mathbb C h_n+\sum_{n\in\mathbb Z}\mathbb Ch_n^1+\mathbb C\omega_0+\mathbb C\omega_1
$$
By \eqnref{ef}-\eqnref{he} we see that 
$$
[e_n,S_+]=[e_n^1,S_+]=0,\quad [h_n,S_+]\subseteq S_+,\quad [h_n^1,S_+]\subseteq S_+,,\quad [f_n,S_+]\subseteq S_0,\quad [f_n^1,S_+]\subseteq S_0.
$$
and similarly 
$[x_n,S_-]=[x_n^1,S_-] \subseteq  S$, $[x_n,S_0]=[x_n^1,S_0] \subseteq  S$ for $x=e,f,h$. 
To sum it up we observe that $[x_n,S]\subseteq S$ and $[x_n^1,S]\subseteq S$ for $n\in\mathbb Z$, $x=h,e,f$.
Thus $[S,S]\subset S$.
Now $S$ contains the generators of $\mathfrak f$ and is a subalgebra.  Hence $S=\mathfrak f$.  Now it is clear that $\phi$ is a Lie algebra isomorphism.
\end{proof}

\section{A triangular decomposition of the $3$-point loop algebras $\mathfrak g\otimes R$}

From now on we identify $R_a$ with $\mathcal S$ and set $R=\mathcal S$ which has a basis $t^i,t^iu$, $i\in\mathbb Z$.  Let $p:R\to R$ be the automorphism given by $p(t)=t$ and $p(u)=-u$.   Then one can decompose $R=R^0\oplus R^1$ where $R^0=\mathbb C[t^{\pm1}]=\{r\in R\,|\, p(r)=r\}$ and
$R^1=\mathbb C[t^{\pm1}]u=\{r\in R\,|\, p(r)=-r\}$ are the eigenspaces of $p$.
From now on $\mathfrak g$ will denote a simple Lie algebra over $\mathbb C$ with triangular decomposition $\mathfrak g=\mathfrak n_-\oplus \mathfrak h\oplus\mathfrak n_+$ and then the $3$-{\it point loop algebra} $L({\mathfrak g}):=\mathfrak g\otimes R$ has a corresponding $\mathbb Z/2\mathbb Z$-grading:  $L({\mathfrak g})^i:=\mathfrak g\otimes R^i$ for $i=0,1$.  However the degree of $t$ does not render $L({\mathfrak g})$ a $\mathbb Z$-graded Lie algebra.  This leads us to the following notion.

Suppose $I$ is an additive subgroup of the rational numbers $\mathbb P$ and $\mathcal A$ is a $\mathbb C$-algebra such that $\mathcal A=\oplus_{i\in I}\mathcal A_i$  and there exists a fixed $l\in\mathbb N$, with
$$
\mathcal A_i\mathcal A_j\subset \oplus_{|k-(i+j)|\leq l}\mathcal A_k
$$
for all $i,j\in\mathbb Z$.  Then $\mathcal A$ is said to be an {\it $l$-quasi-graded algebra}.  For $0\neq x\in \mathcal A_i$ one says that $x$ is {\it homogeneous of degree $i$} and one writes $\deg x=i$.

  For example $R$ has the structure of a $1$-quasi-graded algebra where $I=\frac{1}{2}\mathbb Z$ and $\deg t^i=i$, $\deg t^iu=i+\frac{1}{2}$.

A {\it weak triangular decomposition} of a Lie algebra $\mathfrak l$ is a triple $(\mathfrak H,\mathfrak l_+,\sigma)$ satisfying
\begin{enumerate}
\item $\mathfrak H$ and $\mathfrak l_+$ are subalgebras of $\mathfrak l$,
\item $\mathfrak H$ is abelian and $[\mathfrak H,\mathfrak l_+]\subset \mathfrak l_+$,
\item $\sigma$ is an anti-automorphism of $\mathfrak l$ of order $2$ which is the identity on $\mathfrak h$ and 
\item $\mathfrak l=\mathfrak l_+\oplus \mathfrak H\oplus\sigma( \mathfrak l_+)$.
\end{enumerate}
We will let $\sigma(\mathfrak l_+)$ be denoted by $\mathfrak l_-$.

\begin{thm}[\cite{MR1249871}, Theorem 2.1]  The $3$-point loop algebra $L({\mathfrak g})$ is $1$-quasi-graded Lie algebra where $\deg (x\otimes f)=\deg f$ for $f$ homogeneous.  Set $R_+=\mathbb C(1+u)\oplus \mathbb C[t,u]t$ and $R_-=p(R_+)$.  Then $L({\mathfrak g})$ has a weak triangular decomposition given by
$$
L({\mathfrak g})_\pm=\mathfrak g\otimes R_\pm,\quad \mathcal H:=\mathfrak h\otimes \mathbb C.
$$
\end{thm}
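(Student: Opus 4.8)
\emph{Plan and the quasi-grading.} I would prove the two assertions separately. For the first, give $R=\mathcal S$, with its basis $\{t^i,t^iu\mid i\in\mathbb Z\}$, the grading $\deg t^i=i$, $\deg t^iu=i+\tfrac12$, so that $R=\bigoplus_{j\in\frac12\mathbb Z}R_j$ with $R_i=\mathbb C t^i$ and $R_{i+1/2}=\mathbb C t^iu$. The defining relation $u^2=t^2+4t$ is what controls the degree spread: $t^at^b$ and $t^a\cdot t^bu$ are homogeneous, while $t^au\cdot t^bu=t^{a+b}u^2=t^{a+b+2}+4t^{a+b+1}$ has components of degrees $a+b+2$ and $a+b+1$, against the expected degree $\deg(t^au)+\deg(t^bu)=a+b+1$; hence $R_iR_j\subseteq\bigoplus_{|k-(i+j)|\le1}R_k$ and $R$ is $1$-quasi-graded. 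Since $[x\otimes f,y\otimes g]=[x,y]\otimes fg$, the loop algebra $L(\mathfrak g)=\mathfrak g\otimes R$ inherits this $1$-quasi-grading with $\deg(x\otimes f)=\deg f$ for homogeneous $f$.

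\emph{Axioms (1) and (2).} For the weak triangular decomposition I take $\mathcal H=\mathfrak h\otimes\mathbb C$, $L(\mathfrak g)_+=\mathfrak g\otimes R_+$, and $\sigma=\omega\otimes p$ described below. Since the bracket of $L(\mathfrak g)$ is the Lie bracket of $\mathfrak g$ times the product of $R$, one has $[\mathfrak g\otimes R_+,\mathfrak g\otimes R_+]\subseteq\mathfrak g\otimes(R_+R_+)$ and $[\mathfrak h\otimes\mathbb C,\mathfrak g\otimes R_+]\subseteq[\mathfrak h,\mathfrak g]\otimes(\mathbb C\cdot R_+)\subseteq\mathfrak g\otimes R_+$, while $\mathfrak h\otimes\mathbb C$ is abelian because $\mathfrak h$ is; thus (1) and (2) come down to the single claim that $R_+$ is closed under multiplication. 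This is checked directly on the spanning set of $R_+=\mathbb C(1+u)\oplus\mathbb C[t,u]t$: products internal to $\mathbb C[t,u]t$ stay there, and products involving $1+u$ are the only ones that require the relation $u^2=t^2+4t$ to be carried back into $R_+$. Because $p$ is a ring automorphism, $R_-=p(R_+)$ is then automatically multiplicatively closed as well.

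\emph{Axiom (3).} Let $\omega$ be the Chevalley anti-involution of $\mathfrak g$: it fixes $\mathfrak h$ pointwise, satisfies $\omega(\mathfrak n_\pm)=\mathfrak n_\mp$, $\omega^2=\mathrm{id}$, and $\omega([x,y])=[\omega y,\omega x]$; and let $p$ be as above. Then on pure tensors $\sigma([x\otimes f,y\otimes g])=\omega([x,y])\otimes p(f)p(g)=[\omega y\otimes p(g),\,\omega x\otimes p(f)]=[\sigma(y\otimes g),\sigma(x\otimes f)]$, so $\sigma$ is an anti-automorphism of $L(\mathfrak g)$; moreover $\sigma^2=\omega^2\otimes p^2=\mathrm{id}$, $\sigma$ is the identity on $\mathfrak h\otimes\mathbb C$, and $\sigma(\mathfrak g\otimes R_+)=\omega(\mathfrak g)\otimes p(R_+)=\mathfrak g\otimes R_-=L(\mathfrak g)_-$, so $\sigma$ carries $\mathfrak l_+$ onto $\mathfrak l_-$ as needed.

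\emph{Axiom (4): the main obstacle.} It remains to verify the vector-space direct sum $L(\mathfrak g)=(\mathfrak g\otimes R_+)\oplus(\mathfrak h\otimes\mathbb C)\oplus(\mathfrak g\otimes R_-)$, which is really a statement about the ring $R$ alone: every element of $R$ must decompose, uniquely, into a part in $R_+$, a scalar multiple of $1$, and a part in $R_-$. Establishing this complementarity, in tandem with the multiplicative closure of $R_+$ checked above, is the step where I expect the real work to lie, and it is exactly here that the somewhat unusual shape of $R_\pm$, in particular the non-homogeneous generator $1+u$, becomes unavoidable: the quasi-grading degree of $t$ does not make $L(\mathfrak g)$ genuinely $\mathbb Z$-graded, so the naive split of $R$ into positive and negative powers of $t$ is invariant under $p$ and cannot serve as the pair $R_\pm$ with $R_-=p(R_+)$; the particular choice of $R_+$ is what repairs this. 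Once the complementarity is in place, the four axioms hold and, with the quasi-grading already established, the theorem follows.
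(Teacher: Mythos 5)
The paper itself offers no proof of this statement---it is quoted from Bremner's four-point paper \cite{MR1249871}---so your attempt can only be judged on its own terms. Your treatment of the quasi-grading is correct and complete, and your reduction of axioms (1)--(2) to the single claim that $R_+$ is closed under multiplication, together with the construction $\sigma=\omega\otimes p$ for axiom (3), is the right framework. The difficulty is that the two steps you do not actually carry out are precisely the ones that fail with the data as written in this paper. First, the ``direct check'' of multiplicative closure: with $u^2=t^2+4t$ one gets
$$(1+u)^2=1+2u+t^2+4t,$$
whose component in $\operatorname{span}\{1,u\}$ is $1+2u$, which is not a scalar multiple of $1+u$; hence $(1+u)^2\notin\mathbb C(1+u)\oplus\mathbb C[t,u]t$ and $R_+$ is \emph{not} multiplicatively closed here. (In Bremner's four-point setting $u^2=t^2-2bt+1$ the constant term rescues this: $(1+u)^2=2(1+u)+(t-2b)t\in R_+$.) Second, axiom (4), which you explicitly defer as ``the main obstacle,'' is not merely hard but false for the involution $p(t)=t$, $p(u)=-u$ defined in this paper: since $u^2\in\mathbb C[t]$ we have $\mathbb C[t,u]t=\mathbb C[t]t\oplus\mathbb C[t]ut$, which is itself $p$-stable, so $R_+\cap p(R_+)\supseteq\mathbb C[t,u]t\neq 0$; moreover $R_++\mathbb C+p(R_+)\subseteq\mathbb C[t,u]$ contains no negative powers of $t$, so the three pieces cannot exhaust $R$. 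You half-notice this danger when you remark that the naive $t$-degree splitting is $p$-invariant, but the generator $1+u$ repairs only a one-dimensional piece, not the $p$-invariant bulk $\mathbb C[t,u]t$.

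So the gap is concrete: your outline asserts (closure) or postpones (complementarity) exactly the statements that a direct computation refutes for the ring $\mathcal R$ with $u^2=t^2+4t$ and the automorphism $p$ as defined in Section 3. A completed proof must first repair the data---either transport Bremner's involution $t\mapsto t^{-1}$, $u\mapsto ut^{-1}$ through an appropriate identification, or modify $R_\pm$ (and possibly the defining relation) so that $R_+$ is genuinely a subring with $R_+\oplus\mathbb C\oplus p(R_+)=R$---and only then run the verification you sketch.
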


\subsection{Formal Distributions}
We need some more notation that will simplify some of the arguments
later.
This notation follows roughly \cite{MR99f:17033} and \cite
{MR2000k:17036}:  The {\it formal delta function}
$\delta(z/w)$ is the formal distribution
$$
\delta(z/w)=z^{-1}\sum_{n\in\mathbb Z}z^{-n}w^{n}=w^{-1}\sum_{n\in\mathbb Z}z^{n}w^{-n}.
$$
For any sequence of elements $\{a_{m}\}_{m\in
\mathbb Z}$ in the ring $\End (V)$, $V$ a vector space,  the
formal distribution
\begin{align*}
a(z):&=\sum_{m\in\mathbb Z}a_{m}z^{-m-1}
\end{align*}
is called a {\it field}, if for any $v\in V$, $a_{m}v=0$ for $m\gg0$.
If $a(z)$ is a field, then we set
\begin{align}\label{usualnormalordering}
    a(z)_-:&=\sum_{m\geq 0}a_{m}z^{-m-1},\quad\text{and}\quad
   a(z)_+:=\sum_{m<0}a_{m}z^{-m-1}.
\end{align}
 The {\it normal ordered product} of two distributions
$a(z)$ and
$b(w)$ (and their coefficients) is
defined by
\begin{equation}\label{normalorder}
\sum_{m\in\mathbb Z}\sum_{n\in\mathbb
Z}:a_mb_n:z^{-m-1}w^{-n-1}=:a(z)b(w):=a(z)_+b(w)+b(w)a(z)_-.
\end{equation}

Now we should point out that while $:a^1(z_1)\cdots a^m(z_m):$
is always defined as a formal series, we will only define $:a(z)
b(z)::=\lim_{w\to z}:a(z)b(w):$ 
for certain pairs
$(a(z),b(w))$.  

Then one defines recursively
\[
:a^1(z_1)\cdots a^k(z_k):=:a^1(z_1)\left(:a^2(z_2)\left(:\cdots
:a^{k-1}(z_{k-1}) a^k(z_k):\right)\cdots
:\right):,
\]
while normal ordered product
\[
:a^1(z)\cdots
a^k(z):=\lim_{z_1,z_2,\cdots, z_k\to
z} :a^1(z_1)\left(:a^2(z_2)\left(:\cdots :a^{k-1}(z_{k-1})
a^k(z_k):\right)\cdots
\right):
\]
will only be defined for certain $k$-tuples $(a^1,\dots,a^k)$.

Let 
\begin{equation}\label{contraction}
\lfloor
ab\rfloor=a(z)b(w)-:a(z)b(w):= [a(z)_-,b(w)],
\end{equation}
(half of
$[a(z),b(w)]$) denote the {\it contraction} of any two formal distributions 
$a(z)$ and $b(w)$. 
\begin{thm}[Wick's Theorem, \cite{MR85g:81096}, \cite{MR99m:81001} or  
\cite{MR99f:17033} ]  Let  $a^i(z)$ and $b^j(z)$ be formal
distributions with coefficients in the associative algebra 
 $\End(\mathbb C[\mathbf x]\otimes \mathbb C[\mathbf y])$, 
 satisfying
\begin{enumerate}
\item $[ \lfloor a^i(z)b^j(w)\rfloor ,c^k(x)_\pm]=[ \lfloor
a^ib^j\rfloor ,c^k(x)_\pm]=0$, for all $i,j,k$ and
$c^k(x)=a^k(z)$ or
$c^k(x)=b^k(w)$.
\item $[a^i(z)_\pm,b^j(w)_\pm]=0$ for all $i$ and $j$.
\item The products 
$$
\lfloor a^{i_1}b^{j_1}\rfloor\cdots
\lfloor a^{i_s}b^{i_s}\rfloor:a^1(z)\cdots a^M(z)b^1(w)\cdots
b^N(w):_{(i_1,\dots,i_s;j_1,\dots,j_s)}
$$
have coefficients in
$\End(\mathbb C[\mathbf x]\otimes \mathbb C[\mathbf y])$ for all subsets
$\{i_1,\dots,i_s\}\subset \{1,\dots, M\}$, $\{j_1,\dots,j_s\}\subset
\{1,\cdots N\}$. Here the subscript
${(i_1,\dots,i_s;j_1,\dots,j_s)}$ means that those factors $a^i(z)$,
$b^j(w)$ with indices
$i\in\{i_1,\dots,i_s\}$, $j\in\{j_1,\dots,j_s\}$ are to be omitted from
the product
$:a^1\cdots a^Mb^1\cdots b^N:$ and when $s=0$ we do not omit
any factors.
\end{enumerate}
Then
\begin{align*}
:&a^1(z)\cdots a^M(z)::b^1(w)\cdots
b^N(w):= \\
  &\sum_{s=0}^{\min(M,N)}\sum_{i_1<\cdots<i_s,\,
j_1\neq \cdots \neq j_s}\lfloor a^{i_1}b^{j_1}\rfloor\cdots
\lfloor a^{i_s}b^{j_s}\rfloor
:a^1(z)\cdots a^M(z)b^1(w)\cdots
b^N(w):_{(i_1,\dots,i_s;j_1,\dots,j_s)}.
\end{align*}
\end{thm}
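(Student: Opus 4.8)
The plan is to deduce Wick's theorem from two ingredients: the defining relation $\lfloor ab\rfloor = a(z)b(w)-{:}a(z)b(w){:}=[a(z)_-,b(w)]$ of \eqnref{contraction}, which is already the case $M=N=1$, and a double induction in which the inner step records how a single field moves through a normal ordered product and the outer step peels the factors $a^i(z)$ off the left one at a time.

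\emph{Step 1 (the basic lemma).} First I would prove, by induction on $N$, the case $M=1$:
\begin{align*}
a(z)\,{:}b^1(w)\cdots b^N(w){:}&={:}a(z)b^1(w)\cdots b^N(w){:}\\
&\quad+\sum_{j=1}^N\lfloor a b^j\rfloor\,{:}b^1(w)\cdots b^N(w){:}_{(\,;j)}.
\end{align*}
Splitting $a(z)=a(z)_++a(z)_-$ and using \eqnref{normalorder}, the creation part $a(z)_+$ simply stands to the left and contributes the $a(z)_+$-summand of ${:}a(z)b^1(w)\cdots b^N(w){:}$, while $a(z)_-$ must be commuted to the right. Applying the Leibniz rule for commutators to the recursive definition of ${:}b^1(w)\cdots b^N(w){:}$ and using both \eqnref{contraction} (which gives $[a(z)_-,b^j(w)]=\lfloor ab^j\rfloor$) and hypothesis (2) (which gives $[a(z)_-,b^j(w)_-]=0$, so that only the creation part of each $b^j(w)$ contributes a contraction), one obtains $[a(z)_-,{:}b^1(w)\cdots b^N(w){:}]=\sum_j\lfloor ab^j\rfloor\,{:}b^1(w)\cdots b^N(w){:}_{(\,;j)}$; here each $\lfloor ab^j\rfloor$ is pulled out of the remaining normal ordered product because it is central by hypothesis (1). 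Recombining $a(z)_+$ on the left with $a(z)_-$ on the right via \eqnref{normalorder} reassembles ${:}a(z)b^1(w)\cdots b^N(w){:}$, and hypothesis (3) is exactly what guarantees that every series appearing is again a field, so these moves are legitimate.

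\emph{Step 2 (induction on $M$).} Suppose the theorem holds for $M-1$ and all $N$. From the recursive definition, ${:}a^1(z)\cdots a^M(z){:}=a^1(z)_+\,{:}a^2(z)\cdots a^M(z){:}+{:}a^2(z)\cdots a^M(z){:}\,a^1(z)_-$; multiply this on the right by ${:}b^1(w)\cdots b^N(w){:}$. To the factor ${:}a^2(z)\cdots a^M(z){:}\,{:}b^1(w)\cdots b^N(w){:}$ occurring in both resulting terms apply the inductive hypothesis, and in the minus term use the computation behind Step 1 to push $a^1(z)_-$ through ${:}b^1(w)\cdots b^N(w){:}$, which produces ${:}b^1(w)\cdots b^N(w){:}\,a^1(z)_-$ together with a sum $\sum_j\lfloor a^1 b^j\rfloor\,{:}b^1(w)\cdots\widehat{b^j}(w)\cdots b^N(w){:}$, the latter again to be expanded by the inductive hypothesis. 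Since all central contractions commute with $a^1(z)_\pm$, for each product of contractions the pieces carrying $a^1(z)_+$ on the left and $a^1(z)_-$ on the right with the \emph{same} bracketed operator recombine via \eqnref{normalorder} into a normal ordered product in which $a^1(z)$ occurs uncontracted, while the pieces carrying $\lfloor a^1 b^j\rfloor$ in front are precisely the terms in which $a^1$ is contracted with $b^j$. Collecting all of this yields the asserted identity.

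\emph{Main obstacle.} No individual algebraic step is hard; the delicate point is the combinatorial bookkeeping, i.e.\ checking that the terms generated by the recursion are in bijection with pairs consisting of a subset $\{i_1<\cdots<i_s\}\subseteq\{1,\dots,M\}$ together with a tuple of pairwise distinct $j_1,\dots,j_s\in\{1,\dots,N\}$, each occurring exactly once. This comes out because every $a^i(z)$ occurs once in ${:}a^1(z)\cdots a^M(z){:}$ and so can be contracted at most once, so the $i_k$ are distinct and may be listed increasingly; contracting $a^1$ with $b^j$ removes $b^j$ before the inductive hypothesis acts on what is left, so the later $j_k$ avoid it and, inductively, one another; and the option of leaving $a^1$ uncontracted at this stage is what allows the least contracted $a$-index to exceed $1$, so that all subsets occur. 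The other point to watch is the persistent use of hypothesis (3): it is what keeps all the rearranged series with coefficients in $\End(\mathbb C[\mathbf x]\otimes\mathbb C[\mathbf y])$ and lets the iterated limits $w\to z$ defining both sides exist.
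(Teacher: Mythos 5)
The paper does not prove this statement: Wick's Theorem is quoted verbatim from the cited references (Bogoliubov--Shirkov, Huang, and Kac's \emph{Vertex algebras for beginners}), so there is no in-paper argument to compare against. Your proposal is essentially the standard textbook proof (it is the double induction given in Kac's book): the $M=1$ lemma obtained by splitting $a=a_++a_-$ and commuting $a_-$ through the right-nested normal ordering, with hypothesis (2) killing $[a_-,b^j_-]$ and hypothesis (1) letting each $\lfloor ab^j\rfloor$ be pulled out front; then the induction on $M$ using ${:}a^1\cdots a^M{:}=a^1_+{:}a^2\cdots a^M{:}+{:}a^2\cdots a^M{:}\,a^1_-$, with the centrality of the contractions allowing the $a^1_+$-left and $a^1_-$-right pieces to reassemble into the uncontracted terms and the $\lfloor a^1b^{j}\rfloor$ pieces accounting exactly for the terms with $i_1=1$. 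Your bookkeeping for the bijection with pairs $(\{i_1<\cdots<i_s\};(j_1,\dots,j_s))$ is right. The one point worth making explicit is that the left-hand side involves the equal-argument products ${:}a^1(z)\cdots a^M(z){:}$, which the paper only defines as a limit $z_1,\dots,z_M\to z$ for certain tuples; the clean statement of your argument is for distinct formal variables $z_1,\dots,z_M,w_1,\dots,w_N$, after which hypothesis (3) is what legitimizes passing to the limit on both sides. With that caveat noted, the argument is correct.
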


For $m=i-\frac{1}{2}$, $i\in\mathbb Z+\frac{1}{2}$ and $x\in\mathfrak g$, define $x_{m+\frac{1}{2}}=x\otimes t^{i-\frac{1}{2}}u=x^1_m$ and $x_m:=x\otimes t^m$.  Motivated by conformal field theory we set
\begin{align*}
x^1(z)
&:=\sum_{m\in\mathbb Z}x_{m+\frac{1}{2}}z^{-m-1},\quad x(z):=\sum_{m\in\mathbb Z}x_{m}z^{-m-1}.
\end{align*}
Then the relations in \thmref{3ptthm} can be rewritten as
\begin{align}
[x(z),y(w)]
&= [xy](w)\delta(z/w)-(x,y)\omega_0\partial_w\delta(z/w), \label{r1} \\ 
[x^1(z),y^1(w)]
&= P\left([x,y](w)\delta(z/w) -(x,y)\omega_0\partial_w\delta(z/w)\right)-\frac{1}{2}(x,y)(\partial P)\omega_0  \delta(z/w),  \label{r2} \\ 
[x(z),y^1(w)]
&=[x,y]^1(w)\delta(z/w) -\frac{1}{2}(x,y)\omega_1\sqrt{1+(4/w)}w\partial_w\delta(z/w) =
[x^1(z),y(w)], \label{r3}
 \end{align}
where $x,y\in\{e,f,h\}$.  (In the last commutator one can think of the formal series $\sqrt{1+(4/w)}w$ as being $\sqrt{w^2+4w}=\sqrt{P(w)}$.)
This is because
 \begin{align*}
 \sum_{k,l}\mu_{k,l}z^{-k-1}w^{-l-1} &=-\sum_{k,l}  k\frac{(-1)^{k+l}2^{k+l}(2(k+l)-1)!!}{(k+l+1)!}z^{-k-1}w^{-l-1}  \\
 &=-\sum_{k+l+1\geq 0}\sum_{l\in\mathbb Z}  k\frac{(-1)^{k+l}2^{k+l}(2(k+l)-1)!!}{(k+l+1)!}w^{-k-l-1}z^{-k-1}w^k   \\
 &=-\sum_{n\geq 0}\frac{(-1)^{n-1}2^{n-1}(2n-3)!!}{n!}w^{-n}\sum_{k\in\mathbb Z}   kz^{-k-1}w^k   \\
 &=-\sum_{n\geq 0}\frac{(-1)^{n-1}2^{2n-1}(2n-3)!!}{2^nn!}w^{-n}w\partial_w\delta(z/w)   \\
 &=-\frac{1}{2}\sum_{n\geq 0}\frac{(-1)^{n-1}(2n-3)!!}{2^nn!}(w/4)^{-n}w\partial_w\delta(z/w)   \\
 &=-\frac{1}{2}\sqrt{1+(4/w)}w\partial_w\delta(z/w).
  \end{align*}

 \section{ Oscillator algebras}
 \subsection{The $\beta-\gamma$ system}   The following construction in the physics literature is often called the $\beta-\gamma$ system which corresponds to our $a$ and $a^*$ below.
 Let $\hat{\mathfrak a}$ be the infinite dimensional oscillator algebra with generators $a_n,a_n^*,a^1_n,a^{1*}_n,\,n\in\mathbb Z$ together with $\mathbf 1$ satisfying the relations 
\begin{gather*}
[a_n,a_m]=[a_m,a_n^1]=[a_m,a_n^{1*}]=[a^*_n,a^*_m]=[a^*_n,a^1_m]=[a^*_n,a^{1*}_{m}]=0,\\
[a_n^{1},a_m^{1}]=[a_n^{1*},a_m^{1*}]=0=[\mathfrak a,\mathbf 1], \\
[a_n,a_m^*]=\delta_{m+n,0}\mathbf 1=[a^1_n,a_m^{1*}].
\end{gather*}
For  $c=a,a^1$ and respectively $X=x,x^1$ with $r=0$ or $r=1$, we define $\mathbb C[\mathbf x]:= \mathbb C[x_n,x_n^1\,|\,n\in\mathbb Z$ and $\rho:\hat{\mathfrak a}\to \mathfrak{gl}(\mathbb C[\mathbf x])$ by
\begin{align}
\rho_r( c_{m}):&=\begin{cases}
  \partial/\partial
X_{m}&\quad \text{if}\quad m\geq 0,\enspace\text{and}\enspace  r=0
\\ X_{m} &\quad \text{otherwise},
\end{cases}\label{c}
 \\
\rho_r(c_{m}^*):&=
\begin{cases}X_{-m} &\enspace \text{if}\quad m\leq
0,\enspace\text{and}\enspace r=0\\ -\partial/\partial
X_{-m}&\enspace \text{otherwise}. \end{cases}\label{c*}
\end{align}
and $\rho_r(\mathbf 1)=1$.
These two representations can be constructed using induction:
For $r=0$ the representation 
$\rho_0$ is the
$\hat{\mathfrak a}$-module generated by $1=:|0\rangle$, where
$$
a_{m}|0\rangle=a^1_{m}|0\rangle=0,\quad m\geq  0,
\quad a_{m}^*|0\rangle= a_{m}^{1*}|0\rangle=0,\quad m>0.
$$
For $r=1$ the representation 
$\rho_1$ is the
$\hat{\mathfrak a}$-module generated by $1=:|0\rangle$, where
$$
\quad a_{m}^*|0\rangle= a_{m}^{1*}|0\rangle=0,\quad m\in\mathbb Z.
$$
If we write 
$$
 \alpha(z):=\sum_{n\in\mathbb Z}a_nz^{-n-1},\quad  \alpha^*(z):=\sum_{n\in\mathbb Z}a_n^*z^{-n},
$$
and
$$
 \alpha^1(z):=\sum_{n\in\mathbb Z}a^1_nz^{-n-1},\quad  \alpha^{1*}(z):=\sum_{n\in\mathbb Z}a^{1*}_nz^{-n},
$$
then 
\begin{align*}
[\alpha(z),\alpha(w)]&=[\alpha^*(z),\alpha^*(w)]=[\alpha^{1}(z),\alpha^{1}(w)]=[\alpha^{1*}(z),\alpha^{1*}(w)]=0  \\
[\alpha(z),\alpha^*(w)]&=[\alpha^1(z),\alpha^{1*}(w)]
    =\mathbf 1\delta(z/w).
\end{align*}
Observe that $\rho_1(\alpha(z))$ and 
$\rho_1(\alpha^1(z))$ are not fields whereas $\rho_r(\alpha^*(z))$ $\rho_r(\alpha^{1*}(z))$
are always a field.   
Corresponding to these two representations there are two possible normal orderings:  For $r=0$ we use the usual normal ordering given by \eqnref{usualnormalordering} and for $r=1$ we define the {\it natural normal ordering} to be 
\begin{alignat*}{2}
\alpha(z)_+&=\alpha(z),\quad &\alpha(z)_-&=0 \\
\alpha^1(z)_+&=\alpha^1(z),\quad &\alpha^1(z)_-&=0 \\
\alpha^*(z)_+&=0,\quad &\alpha^*(z)_-&=\alpha^*(z), \\
\alpha^{1*}(z)_+&=0,\quad &\alpha^{1*}(z)_-&=\alpha^{1*}(z) ,
\end{alignat*}

This means in particular that for $r=0$ we get 
\begin{align}
\lfloor \alpha(z)\alpha^*(w)\rfloor
&=\sum_{m\geq 0} \delta_{m+n,0}z^{-m-1}w^{-n}
=\delta_-(z/w)
=\ 
\,\iota_{z,w}\left(\frac{1}{z-w}\right)\\
\lfloor \alpha^*(z)\alpha(w)\rfloor
&
=-\sum_{m\geq 1} \delta_{m+n,0}z^{-m}
w^{-n-1}
=-\delta_+(w/z)=\,\iota_{z,w}\left(\frac{1}{w-z}
\right)
\end{align}
(where $\iota_{z,w}$ Taylor series expansion in the `` region'' $|z|>|w|$), 
and for $r=1$ 
\begin{align}
\lfloor \alpha\alpha^*\rfloor
&=[\alpha(z)_-,\alpha^*(w)]=0 \\
\lfloor \alpha^*\alpha\rfloor
&=[\alpha^*(z)_-,\alpha(w)]=
-\sum_{\in\mathbb Z} \delta_{m+n,0}z^{-m}
w^{-n-1}
=- \delta(w/z),
\end{align}
where similar results hold for $\alpha^1$.
Notice that in both cases we have
$$
[\alpha(z),\alpha^*(w)]=
\lfloor \alpha(z)\alpha^*(w)\rfloor-\lfloor\alpha^*(w) \alpha(z)\rfloor=\delta(z/w).
$$

We will also need the following two
results.
\begin{thm}[Taylor's Theorem, \cite{MR99f:17033}, 2.4.3]
\label{Taylorsthm}  Let
$a(z)$ be a formal distribution.  Then in the region $|z-w|<|w|$,
\begin{equation}
a(z)=\sum_{j=0}^\infty \partial_w^{(j)}a(w)(z-w)^j.
\end{equation}
\end{thm}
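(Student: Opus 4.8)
The plan is to derive the formula from the ordinary binomial theorem, applied one monomial at a time, and then to re-sum. Write $a(z)=\sum_{n\in\mathbb Z}a_nz^{-n-1}$ with coefficients $a_n$ in $\End(V)$, and read $\partial_w^{(j)}$ as the divided power $\tfrac{1}{j!}\partial_w^{\,j}$. The only computation needed is the elementary identity $\partial_w^{(j)}(w^m)=\binom{m}{j}w^{m-j}$, valid for all $m\in\mathbb Z$ and $j\ge 0$, which is immediate from $\partial_w^{\,j}w^m=m(m-1)\cdots(m-j+1)w^{m-j}$ together with $\binom{m}{j}=\tfrac{m(m-1)\cdots(m-j+1)}{j!}$.

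First I would establish the scalar expansion: for each $m\in\mathbb Z$,
\[
z^m=w^m\Bigl(1+\tfrac{z-w}{w}\Bigr)^m=\sum_{j\ge 0}\binom{m}{j}\,w^{m-j}(z-w)^j ,
\]
which is exactly the binomial series $(1+x)^m=\sum_{j\ge 0}\binom{m}{j}x^j$ evaluated at $x=(z-w)/w$; it converges precisely when $|z-w|<|w|$, the region in the statement, and it terminates (a polynomial identity) when $m\ge 0$. Taking $m=-n-1$, multiplying by $a_n$, and summing over $n\in\mathbb Z$ then gives
\[
a(z)=\sum_{n\in\mathbb Z}a_n\sum_{j\ge 0}\binom{-n-1}{j}w^{-n-1-j}(z-w)^j
=\sum_{j\ge 0}\Bigl(\sum_{n\in\mathbb Z}a_n\,\partial_w^{(j)}(w^{-n-1})\Bigr)(z-w)^j,
\]
and the inner sum is by definition $(\partial_w^{(j)}a)(w)$, which is the asserted identity.

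The step deserving care — and the only real obstacle — is the interchange of the two sums, i.e.\ specifying the space in which the equality holds. I would work in $(\End V)[[w,w^{-1}]][[z-w]]$ with the monomials $w^{-k-1}(z-w)^{j}$ as coordinates: on either side the coefficient of a fixed $w^{-k-1}(z-w)^{j}$ receives a contribution from the single index $n=k-j$ only and equals $\binom{j-k-1}{j}\,a_{k-j}$, so no genuine resummation of an infinite family is taking place and the identity holds term by term, which legitimizes the rearrangement. Equivalently, one may verify that the right-hand side $F_a(z,w):=\sum_{j\ge 0}(\partial_w^{(j)}a)(w)(z-w)^j$ satisfies $\partial_zF_a=F_{\partial_za}$ and has $(z-w)$-constant term $a(w)$; a cascade on $j$ shows these two properties pin $F_a$ down uniquely, and the expansion of $a(z)$ shares them, forcing $F_a(z,w)=a(z)$ in the stated region.
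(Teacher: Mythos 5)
Your proof is correct, and it is essentially the standard argument for this result: the paper does not reprove Taylor's theorem but cites Kac (\emph{Vertex algebras for beginners}, 2.4.3), where the proof is exactly the reduction to the binomial expansion of a single monomial $z^m=\sum_{j\ge 0}\binom{m}{j}w^{m-j}(z-w)^j$ in the domain $|z-w|<|w|$. Your additional care about the coefficient-by-coefficient justification of the resummation in $(\End V)[[w,w^{-1}]][[z-w]]$ is a welcome precision but does not change the substance of the argument.
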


\begin{thm}[\cite{MR99f:17033}, Theorem 2.3.2]\label{kac}  Set $\mathbb C[\mathbf x]=\mathbb C[x_n,x^1_n|n\in\mathbb Z]$ and $\mathbb C[\mathbf y]= C[y_m,y_m^1|m\in\mathbb N^*]$.  Let $a(z)$ and $b(z)$ 
be formal distributions with coefficients in the associative algebra 
 $\End()$ where we are using the usual normal ordering.   The
following are equivalent
\begin{enumerate}[(i)]
\item
$\displaystyle{[a(z),b(w)]=\sum_{j=0}^{N-1}\partial_w^{(j)}
\delta(z-w)c^j(w)}$, where $c^j(w)\in \End(\mathbb C[\mathbf x]\otimes \mathbb 
C[\mathbf y])[\![w,w^{-1}]\!]$.
\item
$\displaystyle{\lfloor
ab\rfloor=\sum_{j=0}^{N-1}\iota_{z,w}\left(\frac{1}{(z-w)^{j+1}}
\right)
c^j(w)}$.
\end{enumerate}\label{Kacsthm}
\end{thm}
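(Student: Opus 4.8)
The statement is the standard dictionary, in the calculus of formal distributions, between a commutator written as a finite combination of derivatives of the delta function and the ``singular part'' of the corresponding operator product; the plan is to prove it by separating, on each side of each equation, the powers of $z$ that are negative from those that are nonnegative. Everything rests on the expansion identity
$$
\partial_w^{(j)}\delta(z-w)=\iota_{z,w}\left(\frac{1}{(z-w)^{j+1}}\right)-\iota_{w,z}\left(\frac{1}{(z-w)^{j+1}}\right),
$$
obtained by writing $\delta(z-w)=\iota_{z,w}\left(\frac{1}{z-w}\right)-\iota_{w,z}\left(\frac{1}{z-w}\right)$ as geometric series in the two regions $|z|>|w|$ and $|z|<|w|$ and applying $\partial_w^{(j)}$ term by term. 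The crucial bookkeeping point is that every monomial appearing in $\iota_{z,w}\left(\frac{1}{(z-w)^{j+1}}\right)$ carries a power of $z$ that is $\le-1$, whereas every monomial in $\iota_{w,z}\left(\frac{1}{(z-w)^{j+1}}\right)$ carries a power of $z$ that is $\ge0$. On the operator side, since $a(z)_-=\sum_{m\ge0}a_mz^{-m-1}$ involves only the powers $z^{-1},z^{-2},\dots$ and $a(z)_+=\sum_{m<0}a_mz^{-m-1}$ involves only the powers $z^{0},z^{1},\dots$, relation \eqnref{contraction} shows that
$$
[a(z),b(w)]=[a(z)_-,b(w)]+[a(z)_+,b(w)]=\lfloor ab\rfloor+[a(z)_+,b(w)]
$$
is exactly the splitting of $[a(z),b(w)]$ into its strictly-negative-in-$z$ part (the ``$z_-$-part'') and its nonnegative-in-$z$ part (the ``$z_+$-part''). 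Thus both sides of (i) come equipped with a canonical $z_-$-versus-$z_+$ decomposition, and the displayed identity matches them up term by term: $\iota_{z,w}\left(\frac{1}{(z-w)^{j+1}}\right)c^j(w)$ lies entirely in the $z_-$-part, while $-\iota_{w,z}\left(\frac{1}{(z-w)^{j+1}}\right)c^j(w)$ lies entirely in the $z_+$-part.

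To prove (i)$\Rightarrow$(ii) I would substitute the expansion identity into (i) and extract the $z_-$-parts of the two sides. Since the decomposition of a two-variable $\End(\mathbb C[\mathbf x]\otimes\mathbb C[\mathbf y])$-valued distribution into its $z_-$- and $z_+$-components is unique, and in view of the support statement just recalled, this yields at once $\lfloor ab\rfloor=\sum_{j=0}^{N-1}\iota_{z,w}\left(\frac{1}{(z-w)^{j+1}}\right)c^j(w)$, which is (ii); extracting instead the $z_+$-parts gives the companion formula $[a(z)_+,b(w)]=-\sum_{j=0}^{N-1}\iota_{w,z}\left(\frac{1}{(z-w)^{j+1}}\right)c^j(w)$, which is the piece of data that (ii) alone does not record.

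To prove (ii)$\Rightarrow$(i) I would run the same circle of ideas backwards, but here one needs the additional input that $(a,b)$ is a \emph{local pair}: that $[a(z),b(w)]$ admits \emph{some} finite expansion $\sum_{k}\partial_w^{(k)}\delta(z-w)\,\widetilde{c}^{k}(w)$ (the standing hypothesis in this part of the theory; in the applications below it holds because the relevant fields are built from the $\beta$-$\gamma$ system and the oscillator algebra, whose commutators are already exhibited in the form (i), see \eqnref{r1}, \eqnref{r2} and \eqnref{r3}). Granting this, put $\widetilde{c}^{k}:=0$ for $k\ge N$ as well; taking $z_-$-parts of $[a(z),b(w)]=\sum_{k}\partial_w^{(k)}\delta(z-w)\widetilde{c}^{k}(w)$ and comparing with (ii) gives $\sum_{k}\iota_{z,w}\left(\frac{1}{(z-w)^{k+1}}\right)\bigl(\widetilde{c}^{k}(w)-c^{k}(w)\bigr)=0$ (with $c^k:=0$ for $k\ge N$), and applying $\mathrm{Res}_z\bigl((z-w)^{l}\,\cdot\,\bigr)$ --- the coefficient of $z^{-1}$ --- together with the elementary identity $\mathrm{Res}_z\bigl((z-w)^{l}\iota_{z,w}\left(\frac{1}{(z-w)^{k+1}}\right)\bigr)=\delta_{l,k}$, valid for $l,k\ge0$, forces $\widetilde{c}^{l}=c^{l}$ for every $l$. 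Hence $[a(z),b(w)]=\sum_{j=0}^{N-1}\partial_w^{(j)}\delta(z-w)c^j(w)$, which is (i).

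The main obstacle, and the only genuinely delicate point, is exactly this locality input needed for the converse: the contraction $\lfloor ab\rfloor=[a(z)_-,b(w)]$ records only the modes $a_m$ with $m\ge0$ (through \eqnref{usualnormalordering} and \eqnref{contraction}), so by itself it cannot pin down the contribution of the modes $a_m$ with $m<0$ to $[a(z),b(w)]$; one must know independently that the commutator has finite singular support in $z$, equivalently that $(z-w)^{N}[a(z),b(w)]=0$ for some $N$. Once that is secured, all that remains is routine bookkeeping with the $\iota_{z,w}$-expansions and formal residues, the only ambient ingredient being Taylor's theorem (\thmref{Taylorsthm}).
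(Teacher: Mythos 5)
The paper does not prove this statement; it is imported verbatim from Kac's book \cite{MR99f:17033} (Theorem 2.3.2), so there is no in-paper argument to compare yours against. Your proof is the standard one and is correct: the identity $\partial_w^{(j)}\delta(z-w)=\iota_{z,w}\bigl((z-w)^{-j-1}\bigr)-\iota_{w,z}\bigl((z-w)^{-j-1}\bigr)$, together with the observation that $\iota_{z,w}$-expansions carry only negative powers of $z$ while $\iota_{w,z}$-expansions carry only nonnegative ones, matched against the splitting $[a(z),b(w)]=[a(z)_-,b(w)]+[a(z)_+,b(w)]$ coming from \eqnref{usualnormalordering} and \eqnref{contraction}, gives (i)$\Rightarrow$(ii) by uniqueness of that decomposition; and your residue identity $\mathrm{Res}_z\bigl((z-w)^{l}\,\iota_{z,w}\bigl((z-w)^{-k-1}\bigr)\bigr)=\delta_{l,k}$ is correct and does recover the $c^j$ for the converse. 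Your caveat about the converse is also well taken and is a genuine feature of the statement as transcribed here: condition (ii) records only $[a(z)_-,b(w)]$ and so cannot by itself reconstruct $[a(z)_+,b(w)]$. In Kac's original formulation this is handled either by including the companion formula $[a(z)_+,b(w)]=-\sum_{j}\iota_{w,z}\bigl((z-w)^{-j-1}\bigr)c^j(w)$ as part of (ii) or by restricting to local pairs; with that extra datum, or with the locality hypothesis you supply (which holds for all the pairs actually used in \thmref{mainthm}, since their commutators are exhibited in the form (i) via \eqnref{r1}--\eqnref{r3}), your argument closes correctly.
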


In other words the singular part of the {\it operator product
expansion}
$$
\lfloor
ab\rfloor=\sum_{j=0}^{N-1}\iota_{z,w}\left(\frac{1}{(z-w)^{j+1}}
\right)c^j(w)
$$
completely determines the bracket of mutually local formal
distributions $a(z)$ and $b(w)$.   One writes
$$
a(z)b(w)\sim \sum_{j=0}^{N-1}\frac{c^j(w)}{(z-w)^{j+1}}.
$$

\subsection{The $3$-point Heisenberg algebra} 
The Cartan subalgebra $\mathfrak h$ tensored with $\mathcal R$ generates a subalgebra of $\hat{{\mathfrak g}}$ which is an extension of an oscillator algebra.    This extension motivates the following definition:  The Lie algebra with generators $b_{m},b_m^1$, $m\in\mathbb Z$, $\mathbf 1_0,\mathbf 1_1 $, and relations
\begin{align}
[b_{m},b_{n}]&=(n-m)\,\delta_{m+n,0}\mathbf 1_0=-2m\,\delta_{m+n,0}\mathbf 1_0\label{b1} \\
[b^1_m,b_n^1] &=(n-m)\left(\delta_{m+n,-2}+4\delta_{m+n,-1}\right)\mathbf 1_0 = 2 \left((n+1)\delta_{m+n,-2}+(4n+2)\delta_{m+n,-1}\right)\mathbf 1_0\label{b2}\\
[b_n, b^1_m] & =  2\mu_{n,m} \mathbf 1_1  = - [b^1_m,b_n] 
\label{b3} \\
[b_{m},\mathbf 1_0]&=[b_{m}^1,\mathbf 1_0]=[b_{m},\mathbf 1_1]=[b_{m}^1,\mathbf 1_1]= 0.\label{b4}
\end{align}
we will give the appellation the {\it $3$-point (affine) Heisenberg algebra} and denote it by $\hat{\mathfrak h}_3$.

If we introduce the formal distributions
\begin{equation} 
\beta(z):=\sum_{n\in\mathbb Z} b_nz^{-n-1},\quad \beta^1(z):=\sum_{n\in\mathbb Z}b_n^1z^{-n-1}=\sum_{n \in\mathbb Z}b_{n+\frac{1}{2}}z^{-n-1}.
\end{equation}
(where $b_{n+\frac{1}{2}}:=b^1_n$)
then using calculations done earlier for the $3$-point Lie algebra we can see that the relations above can be rewritten in the form
\begin{align*}\label{bosonrelations}
[\beta(z),\beta(w)]&=2\mathbf 1_0\partial_z\delta(z/w)=-2\partial_w\delta(z/w)\mathbf 1_0 \\
[\beta^1(z),\beta^1(w)]
&=-2\left((w^2+4w) \partial_w(\delta(z/w)+ (2+w) \delta(z/w)\right)\mathbf 1_0 \\
[\beta(z),\beta^1(w)]&= -\sqrt{1+(4/w)}w \partial_w\delta(z/w)\mathbf 1_1
\end{align*}

 Set
\begin{align*}
\hat{\mathfrak h}_3^\pm:&=\sum_{n\gtrless 0}\left(\mathbb Cb_n+\mathbb Cb_n^1\right),\quad
\hat{ \mathfrak h}_3^0:=  \mathbb C\mathbf 1_0\oplus \mathbb C\mathbf 1_1\oplus \mathbb Cb_0\oplus \mathbb Cb^1_0.
\end{align*}
We introduce a Borel type subalgebra
\begin{align*}
\hat{\mathfrak b}_3&= \hat{\mathfrak h}_3^+\oplus \hat{\mathfrak h}_3^0.
\end{align*}
Due to the defining relations above one can see that $\hat{\mathfrak b}_3$ is a subalgebra.

\begin{lem}\label{heisenbergprop}
Let $\mathcal V=\mathbb C\mathbf v_0\oplus \mathbb C\mathbf v_1$ be a two dimensional representation of $\hat{\mathfrak h}_3^+\mathbf v_i=0$ for $i=0,1$.   Suppose  $\lambda,\mu,\nu,\varkappa, \chi_1,\kappa_0 \in \mathbb C$  are such that
\begin{align*}
b_0\mathbf v_0&=\lambda \mathbf v_0,  &b_0\mathbf v_1&=\lambda \mathbf v_1 \\
b_0^1\mathbf v_0&=\mu \mathbf v_0+\nu \mathbf v_1,  &b_0^1\mathbf v_1&=\varkappa \mathbf v_0+\mu \mathbf v_1\\
\mathbf 1_1\mathbf v_i&=\chi _1  \mathbf v_i,\quad   &\mathbf 1_0\mathbf v_i&=\kappa _0\mathbf v_i,\quad i=0,1.
\end{align*}
Then the above defines a representation of  $\hat{\mathfrak b}_3$ if $\chi_1 = 0$.
\end{lem}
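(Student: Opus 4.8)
The plan is to verify directly that the assignment in the statement respects every defining relation of $\hat{\mathfrak b}_3 = \hat{\mathfrak h}_3^+ \oplus \hat{\mathfrak h}_3^0$, i.e. that the prescribed operators on $\mathcal V = \mathbb C\mathbf v_0 \oplus \mathbb C\mathbf v_1$ satisfy \eqnref{b1}--\eqnref{b4} restricted to this subalgebra. Since $\mathcal V$ is only two-dimensional and $\hat{\mathfrak h}_3^+\mathbf v_i = 0$, the bulk of the relations are satisfied trivially: any bracket $[b_m, b_n]$, $[b^1_m, b^1_n]$, or $[b^1_m, b_n]$ with $m$ or $n$ positive lands on a central element times a Kronecker delta, and one checks that in every such case either the delta forces both indices to be nonpositive (so the relation is among generators of $\hat{\mathfrak h}_3^0$, handled below) or the delta vanishes outright. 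For brackets with one index positive and the resulting central term nonzero we must also confirm that the right-hand side acts as zero on $\mathcal V$ whenever the left-hand side does; this is automatic because the only central elements appearing, $\mathbf 1_0$ and $\mathbf 1_1$, act by scalars and the positive-mode operators annihilate $\mathcal V$, so $[b_m, b_{-m}]\mathbf v_i = b_m(b_{-m}\mathbf v_i)$ reduces to a finite computation in $\hat{\mathfrak h}_3^0$.

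The only genuine content is therefore the restriction to $\hat{\mathfrak h}_3^0 = \mathbb C\mathbf 1_0 \oplus \mathbb C\mathbf 1_1 \oplus \mathbb Cb_0 \oplus \mathbb Cb_0^1$. First I would record that $[b_0, b_0] = 0$ and $[b_0^1, b_0^1] = 0$ by \eqnref{b1}--\eqnref{b2} (each has $n - m = 0$), and $[b_0^1, b_0] = 0$ by \eqnref{b3}; and $\mathbf 1_0, \mathbf 1_1$ are central. So $\hat{\mathfrak h}_3^0$ is in fact abelian, and all that is required is that the four operators $b_0, b_0^1, \mathbf 1_0, \mathbf 1_1$ on $\mathcal V$ commute pairwise. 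Now $b_0$ and the two central elements act as scalars ($\lambda$, $\kappa_0$, $\chi_1$ respectively), hence commute with everything; the only pair to check is $b_0$ against $b_0^1$. In the basis $\{\mathbf v_0, \mathbf v_1\}$, $b_0$ is $\lambda I$, which commutes with the matrix $\begin{pmatrix} \mu & \varkappa \\ \nu & \mu \end{pmatrix}$ of $b_0^1$ trivially. Thus all relations of $\hat{\mathfrak h}_3^0$ hold, and together with the vanishing observations above, all relations of $\hat{\mathfrak b}_3$ hold.

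The main (and only) subtlety I anticipate is bookkeeping: one must be careful that for each relation of $\hat{\mathfrak h}_3$ involving an element outside $\hat{\mathfrak b}_3$ (for instance $[b_1, b_{-1}]$, which is not in the Borel but whose bracket lies in $\hat{\mathfrak h}_3^0$), the relation either does not constrain $\hat{\mathfrak b}_3$ or is already covered — but since $b_{-1} \in \hat{\mathfrak h}_3^+$ acts as zero, $[b_1, b_{-1}]\mathbf v_i$ is computed as $b_1 \cdot 0 = 0$, matching $-2\delta_{0,0}\mathbf 1_0 \mathbf v_i$ only if $\kappa_0$ is unconstrained — and indeed it is, because such a bracket is \emph{not} a relation internal to the subalgebra $\hat{\mathfrak b}_3$, so no constraint arises. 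Once this is correctly isolated, the proof is a short direct check, and I would present it as: ``Since $\hat{\mathfrak h}_3^+$ annihilates $\mathcal V$ and $b_0$, $\mathbf 1_0$, $\mathbf 1_1$ act as scalars while $b_0^1$ preserves $\mathcal V$, relations \eqnref{b1}--\eqnref{b4} are immediately verified on $\mathcal V$; hence the formulas define a representation of $\hat{\mathfrak b}_3$.''
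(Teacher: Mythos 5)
Your proposal is correct and follows essentially the same route as the paper: restrict every defining relation \eqnref{b1}--\eqnref{b4} to indices $m,n\geq 0$, observe that the right-hand sides all vanish there (the Kronecker deltas force $m=n=0$ and kill the coefficient, or cannot be satisfied at all), and note that the operators in play are the zero operator, scalars, and one matrix commuting with $\lambda I$, so the left-hand sides vanish as well. The only blemish is the aside claiming $b_{-1}\in\hat{\mathfrak h}_3^+$ (it lies in $\hat{\mathfrak h}_3^-$ and is simply not part of $\hat{\mathfrak b}_3$), but your final conclusion there --- that such brackets impose no constraint on the Borel subalgebra --- is the right one and matches the paper's treatment.
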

\begin{proof} Since $b_m$ acts by scalar multiplication for $m,n\geq 0$, the first defining relation \eqnref{b1} is satisfied for $m,n\geq 0$. 
The second relation \eqnref{b2} is also satisfied as the right hand side is zero if $m\geq 0,n\geq 0$.  If $n=0$, then since $b_0$ acts by a scalar, the relation \eqnref{b3} leads to no condition on $\lambda,\mu,\nu,\varkappa, \chi_1,\kappa_0 \in \mathfrak h_4^0$.    If $m\geq 0$ and $n>0$, the third relation gives the condition on $\chi_1$ as 
$$
0=b^1_mb_n\mathbf v_i-b_nb^1_m\mathbf v_i=[b^1_m,b_n] \mathbf v_i=-2\mu_{n,m}\chi_1\mathbf v_i
$$
which forces $\chi_1 = 0$.

\end{proof}
Let $B_0^1$ denote the linear transformation on $\mathcal V$ that agrees with the action of $b_0^1$.
If we define the notion of a $\widehat{\mathfrak b}_3$-submodule as is done in \cite{MR1328538}, Definition 1.2, then $\mathcal V$ above is an irreducible $\widehat{\mathfrak b}_3$-module when $\varkappa \nu\neq 0$ i.e. if $\det B_0^1\neq \mu^2$.  If one induces up from $\mathcal V$, the resulting representation for the three point affine algebra would not have a chance of being irreducible if $\mathcal V$ were not irreducible (in the sense of Sheinman) itself. 

Let $ \mathbb C[\mathbf y]:= \mathbb C[y_{-n}, y_{-m}^1 | m,n \in \mathbb N^*] $.

\begin{lem} \label{rhorep}The linear map $\rho:\hat{\mathfrak b}_3\to \text{End}(\mathbb C[\mathbf y]\otimes \mathcal V)$ defined  by 
\begin{align}
\rho(b_{n})&=y_{n} \quad \text{ for }n<0 \\
\rho(b_{n}^1)&=y_{n}^1+\delta_{n,-1}\partial_{y_{-3}^1}\chi_0-\delta_{n,-3}\partial_{y_{-1}^1}\chi_0\quad \text{ for }n<0 \\
\rho (b_n) &= -2n  \partial_{ y_{-n} }\chi_0   \quad \text{ for }n>0 \\
\rho(b^1_n)&=  2(n+2) \partial_{y^1_{-n-4}} \chi_0-4c(n+1) \partial_{y^1_{-n-2}} \chi_0 +2n \partial_{y^1_{-n}} \chi_0  \quad \text{ for }n>0\\
\rho(b_{0})&=\lambda \\
\rho(b_{0}^1)&=4\partial_{y_{-4}^1}\chi_0-2c\partial_{y_{-2}^1}\chi_0 +B_0^1.
\end{align}
is a representation of $\hat{\mathfrak b}_3$.
\end{lem}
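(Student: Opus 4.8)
The plan is to verify directly that the operators prescribed by $\rho$ satisfy the defining relations \eqnref{b1}--\eqnref{b4} of $\hat{\mathfrak h}_3$; since $\hat{\mathfrak b}_3$ is presented by its generators together with these relations, that is enough. The first observation is that $\hat{\mathfrak b}_3=\hat{\mathfrak h}_3^+\oplus\hat{\mathfrak h}_3^0$ is in fact abelian: each cocycle on the right-hand side of \eqnref{b1}--\eqnref{b3} carries one of the factors $\delta_{m+n,0}$, $\delta_{m+n,-1}$, $\delta_{m+n,-2}$, and for $m,n\ge 0$ such a factor is nonzero only if $m=n=0$ and $m+n=0$, in which case the accompanying $(n-m)$ (equivalently $-2m$) vanishes. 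Hence the content of the lemma reduces to two points: (i) the images $\rho(b_m),\rho(b_m^1)$ for $m\ge 0$, together with $\rho(\mathbf 1_0)=\kappa_0$ and $\rho(\mathbf 1_1)=\chi_1$, must pairwise commute; and (ii) on the subspace $1\otimes\mathcal V$ they must restrict to the $\hat{\mathfrak b}_3$-action of \lemref{heisenbergprop} (in particular this forces the identification $\chi_0=\kappa_0$).

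For (i): when $m>0$ each of $\rho(b_m)$ and $\rho(b_m^1)$ is a first-order partial differential operator with \emph{constant} coefficients --- the constants being $\chi_0,\chi_1$ and $c$ --- acting only on the polynomial factor $\mathbb C[\mathbf y]$; $\rho(b_0)=\lambda$ is a scalar; and $\rho(b_0^1)=4\partial_{y_{-4}^1}\chi_0-2c\,\partial_{y_{-2}^1}\chi_0+B_0^1$ is such a differential operator plus the operator $B_0^1\in\End(\mathcal V)$, which touches only the other tensor factor. Any two constant-coefficient differential operators commute, and operators acting on different tensor factors commute, so all the required brackets vanish, matching the vanishing right-hand sides. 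For (ii): evaluating at $1\in\mathbb C[\mathbf y]$ annihilates every $\partial_y$, so $\rho(b_0)$, $\rho(b_0^1)$, $\rho(\mathbf 1_0)$, $\rho(\mathbf 1_1)$ act on $1\otimes\mathbf v_i$ by exactly the data $\lambda$, $B_0^1$, $\kappa_0$, $\chi_1$ of \lemref{heisenbergprop}, while $\rho(b_m),\rho(b_m^1)$ with $m>0$, being purely differential, annihilate $1\otimes\mathcal V$, matching $\hat{\mathfrak h}_3^+\mathbf v_i=0$.

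It is worth recording that the same formulas extend $\rho$ to all of the Heisenberg algebra $\hat{\mathfrak h}_3$, the negative modes $\rho(b_n),\rho(b_n^1)$ with $n<0$ acting as multiplication by $y_n$, $y_n^1$ plus the indicated corrections, and that this extension is where the real information lies: the only nonzero brackets then occur between a positive mode (a combination of $\partial_y$'s) and a negative mode (a combination of $y$'s), so computing $[\rho(b_m),\rho(b_n)]$, $[\rho(b_m^1),\rho(b_n^1)]$ and $[\rho(b_m^1),\rho(b_n)]$ for $m>0>n$ reduces to reading off which variable each $\partial_{y^{(1)}_{j}}$ contracts against. The genuine obstacle is the index bookkeeping: one must check that the shifts built into $\rho(b_m^1)$, together with the correction terms $\delta_{n,-1}\partial_{y_{-3}^1}\chi_0-\delta_{n,-3}\partial_{y_{-1}^1}\chi_0$ on the creation side, conspire so that every spurious Kronecker delta cancels and the survivors assemble precisely into $(n-m)(\delta_{m+n,-2}+4\delta_{m+n,-1})\kappa_0$, $-2m\,\delta_{m+n,0}\kappa_0$ and $-2m\,\delta_{m+n,0}\chi_1$ --- exactly the structure constants produced by \propref{uce} and its corollary. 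I would carry this out by fixing $m>0$, expanding $\rho(b_m^1)$ term by term, and commuting each summand past the multiplication operator and the corrections in $\rho(b_n^1)$.
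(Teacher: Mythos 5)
Your treatment of the literal statement is correct and, for that fragment, cleaner than the paper's: every cocycle on the right of \eqnref{b1}--\eqnref{b3} vanishes when both indices are nonnegative, so $\hat{\mathfrak b}_3$ is abelian, and the operators assigned to $b_m,b^1_m$ with $m\ge 0$ are constant-coefficient first-order differential operators in the $\mathbb C[\mathbf y]$ factor (plus $B_0^1$ on the $\mathcal V$ factor), hence pairwise commute. However, this establishes only the trivial part of the lemma. The formulas for $\rho(b_n)$, $\rho(b^1_n)$ with $n<0$ are not decoration: the lemma is used in \thmref{mainthm} through the full fields $\beta(z)=\sum_{n\in\mathbb Z}b_nz^{-n-1}$ and $\beta^1(z)$, so what has to be verified --- and what the paper's proof actually consists of --- is that the relations \eqnref{b1}--\eqnref{b4} hold for \emph{all} $m,n\in\mathbb Z$. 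That is precisely the computation you defer in your final paragraph (``I would carry this out by\dots''): the cross brackets $[\rho(b_m),\rho(b_n)]$, $[\rho(b_m),\rho(b^1_n)]$, $[\rho(b^1_m),\rho(b^1_n)]$, $[\rho(b^1_m),\rho(b_n)]$ for $m>0\ge n$, where the central terms $-2m\delta_{m,-n}\chi_0$, $-2m\delta_{m,-n}\chi_1$ and the $\delta_{m+n,-2}$-, $\delta_{m+n,-4}$-type contributions actually arise. A write-up that stops before these computations has not proved the lemma in the form in which it is needed.

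There is also a concrete error in your outline of the deferred part. You assert that, after extending to negative modes, ``the only nonzero brackets occur between a positive mode (a combination of $\partial_y$'s) and a negative mode (a combination of $y$'s).'' This overlooks the correction terms $\delta_{n,-1}\partial_{y_{-3}^1}\chi_0-\delta_{n,-3}\partial_{y_{-1}^1}\chi_0$, which place derivative operators inside the \emph{negative} modes $\rho(b^1_{-1})$, $\rho(b^1_{-3})$ (and inside $\rho(b_0^1)$), so pairs of non-positive modes need not commute; the paper devotes three displayed computations exactly to $[\rho(b^1_{-1}),\rho(b^1_m)]$, $[\rho(b^1_{-3}),\rho(b^1_m)]$ and $[\rho(b^1_{0}),\rho(b^1_m)]$ for $m<0$, and these are precisely the brackets against which \eqnref{b2} must be checked, so they cannot be dismissed a priori. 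Finally, your identification $\rho(\mathbf 1_0)=\kappa_0$ (and the claim that the construction ``forces $\chi_0=\kappa_0$'') is at odds with the formulas, whose cross brackets produce the scalar $\chi_0$, and with \thmref{mainthm}, where $\chi_0=\kappa_0+4\delta_{r,0}$; reconciling the central character of the Fock factor with that of $\mathcal V$ is part of the bookkeeping your argument needs to confront rather than assume.
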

\begin{proof}
For $m,n> 0$, it is straight forward to see $[\rho(b_n),\rho(b_m)]=[\rho(b^1_n),\rho(b^1_m)]=0$, and similarly for $m,n<0$, $[\rho(b_n),\rho(b_m)]=0$ and $[\rho(b^1_n),\rho(b^1_m)]=0$ if $n\notin\{-1,-3\}$. But
\begin{align*}
[\rho(b^1_{-1}),\rho(b^1_m)]&=[y_{-1}^1+\partial_{y_{-3}^1}\chi_0,y_{m}^1+\delta_{m,-1}\partial_{y_{-3}^1}\chi_0-\delta_{m,-3}\partial_{y_{-1}^1}\chi_0]\\
&=-\delta_{m,-3}\chi_0[y_{-1}^1,\partial_{y_{-1}^1}]\chi_0+\delta_{m,-3}[\partial_{y_{-3}^1},y_{-3}^1]\chi_0\\
&=-2\delta_{m,-3}\chi_0
\end{align*}
\begin{align*}
[\rho(b^1_{-3}),\rho(b^1_m)]&=[y_{-3}^1-\partial_{y_{-1}^1}\chi_0,y_{m}^1+\delta_{m,-1}\partial_{y_{-3}^1}\chi_0-\delta_{m,-3}\partial_{y_{-1}^1}\chi_0]\\
&=\delta_{m,-1}\chi_0[y_{-3}^1,\partial_{y_{-3}^1}]\chi_0-\delta_{m,-1}[\partial_{y_{-1}^1},y_{-1}^1]\chi_0\\
&=2\delta_{m,-1}\chi_0
\end{align*}
\begin{align*}
[\rho(b^1_{0}),\rho(b^1_m)]&=[4\partial_{y_{-4}^1}\chi_0-2c\partial_{y_{-2}^1}\chi_0,y_{m}^1+\delta_{m,-1}\partial_{y_{-3}^1}\chi_0-\partial_{m,-3}\partial_{y_{-1}^1}\chi_0]\\
&=-4\delta_{m,-4}\chi_0+2c\partial_{m,-2}\chi_0
\end{align*}

For $m>0$ and $n\leq 0$ we have 
\begin{align*}
[\rho(b_m),\rho(b_n)]&=[-m (2 \partial_{ y_{-m} }\chi_0   +  2\partial_{y_{-m}^1} \chi_1) ,y_n]  \\ 
&=-2m\delta_{m,-n}\chi_0,  \\ \\ 
[\rho(b_m),\rho(b^1_n)]&=[-m (2 \partial_{ y_{-m} }\chi_0   +  2\partial_{y_{-m}^1} \chi_1) ,y^1_n+\delta_{n,-1}\partial_{y_{-3}^1}\chi_0-\delta_{n,-3}\partial_{y_{-1}^1}\chi_0]  \\ 
&=-2m\delta_{m,-n}\chi_1 ,    \\  \\
[\rho(b^1_m),\rho(b^1_n)]&=[-2n\partial_{y_{-n}} \chi_1 +2(n+2) \partial_{y^1_{-n-4}} \chi_0-4c(n+1) \partial_{y^1_{-n-2}} \chi_0 +2n \partial_{y^1_{-n}} \chi_0\\
&\hspace{1.0cm} ,y^1_n+\delta_{n,-1}\partial_{y_{-3}^1}\chi_0-\delta_{n,-3}\partial_{y_{-1}^1}\chi_0]  \\ 
&= 2(n+2) \delta_{m+n,-4} \chi_0 -4c(n+1) \delta_{m+n,-2} \chi_0+2n\delta_{m+n,0}\chi_0,      \\  \\
[\rho(b^1_m),\rho(b_n)]&=[-2m\partial_{y_{-m}} \chi_1 +2(m+2) \partial_{y^1_{-m-4}} \chi_0-4c(m+1) \partial_{y^1_{-m-2}} \chi_0 +2m \partial_{y^1_{-m}} \chi_0 \\
&\hspace{1.0cm} ,y_n+\delta_{n,-1}\partial_{y_{-3}^1}\chi_0-\delta_{n,-3}\partial_{y_{-1}^1}\chi_0]  \\ 
&= -2m\delta_{m,-n}\chi_1  .
\end{align*}

\end{proof}
\color{black}

\section{Two realizations of the affine $3$-point algebra $\hat{{\mathfrak g}}$}
Our main result is the following 
\begin{thm} \label{mainthm}  Fix $r\in\{0,1\}$, which then fixes the corresponding normal ordering convention defined in the previous section.  Set $\hat{{\mathfrak g}} =\left(\mathfrak{sl}(2,\mathbb C)\otimes \mathcal R\right)\oplus \mathbb C\omega_0\oplus \mathbb C\omega_1$ and assume that $\chi_0\in\mathbb C$ and $\mathcal V$ as in \lemref{heisenbergprop}.   Then using \eqnref{c}, \eqnref{c*} and \lemref{rhorep}, the following defines a representation of the three point algebra ${\mathfrak g}$ on $\mathbb C[\mathbf x]\otimes \mathbb C[\mathbf y]\otimes \mathcal V$:
\begin{align*}
\tau(\omega_1)&=0, \qquad
\tau(\omega_0)=\chi_0=\kappa_0+4\delta_{r,0} ,  \\ 
\tau(f(z))&=-\alpha, \qquad
\tau(f^1(z))=- \alpha^1,   \\ \\
\tau(h(z))
&=2\left(:\alpha\alpha^*:+:\alpha^1\alpha^{1*}: \right)
     +\beta , \\  \\
\tau(h^1(z))
&=2\left(:\alpha^1\alpha^*: +(z^2+4z):\alpha\alpha^{1*}: \right) +\beta^1,  \\  \\
\tau(e(z)) 
&=:\alpha(\alpha^*)^2:+(z^2+4z):\alpha(\alpha^{1*})^2: +2 :\alpha^1\alpha^*\alpha^{1*}:+\beta\alpha^*+\beta^1\alpha^{1*} +\chi_0\partial\alpha^*  \\ \\
\tau(e^1(z))  
&=\alpha^1\alpha^*\alpha^* 
 	+(z^2+4z)\left(\alpha^1 (\alpha^{1*} )^2 +2 : \alpha \alpha^{*} \alpha^{1*}:\right)  \\
&\quad +\beta^1 \alpha^* +(z^2+4z)\beta \alpha^{1*} +\chi_0\left((z^2+4z)   \partial_z\alpha^{1*}   +(z+2)   \alpha^{1*} \right) .
\end{align*}
\end{thm}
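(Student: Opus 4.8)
The plan is to verify that $\tau$ respects each of the bracket relations of the three-point algebra, written in the compact generating-function form \eqnref{r1}--\eqnref{r3}, by computing operator product expansions and invoking \thmref{Kacsthm} (Kac's theorem) to pass between OPEs and commutators. First I would record the ``free'' contractions that underlie everything: the $\beta$-$\gamma$ contractions $\lfloor\alpha(z)\alpha^*(w)\rfloor$, $\lfloor\alpha^1(z)\alpha^{1*}(w)\rfloor$ computed in the excerpt (they depend on $r$ through the chosen normal ordering), and the Heisenberg contractions coming from \lemref{rhorep}, i.e. the singular parts of $\beta(z)\beta(w)$, $\beta^1(z)\beta^1(w)$, $\beta^1(z)\beta(w)$, which by \eqnref{b1}--\eqnref{b4} reproduce the $\omega_0,\omega_1$ terms with $\mathbf 1_0\mapsto\chi_0$, $\mathbf 1_1\mapsto 0$. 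I would then treat the cases $r=0$ and $r=1$ uniformly wherever possible, isolating the single place — the $\chi_0$ versus $\kappa_0$ discrepancy, i.e. the extra $+4\delta_{r,0}$ in $\tau(\omega_0)$ — where the two normal orderings genuinely diverge; this shift is exactly the anomaly produced by re-ordering $:\alpha\alpha^*:$ type terms, and tracking it carefully is what makes the two statements simultaneously correct.

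The verification itself splits along the triangular structure. The relations among the $f$'s and $f^1$'s (the analogue of \eqnref{xs}) are immediate since $\tau(f(z))=-\alpha(z)$, $\tau(f^1(z))=-\alpha^1(z)$ are (images of) mutually commuting free fields. The bracket $[\tau(h(z)),\tau(h(w))]$, $[\tau(h^1(z)),\tau(h^1(w))]$, $[\tau(h(z)),\tau(h^1(w))]$: here the $:\alpha\alpha^*:$, $:\alpha^1\alpha^{1*}:$ pieces contribute only through double contractions (Wick's theorem), which produce the $\partial_w\delta$ anomaly with the correct coefficient after accounting for the $r$-shift, while the $\beta$, $\beta^1$ pieces supply the rest; one must check the polynomial coefficients $(z^2+4z)$ and $(z+2)$ appearing in $\tau(h^1(z))$ combine to match the $P$ and $\partial P$ factors in \eqnref{r2}. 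The brackets $[\tau(h),\tau(e)]$, $[\tau(h),\tau(f)]$ and their $1$-variants reduce to single contractions and should fall out quickly. The genuinely laborious cases are $[\tau(e(z)),\tau(f(w))]$, $[\tau(e(z)),\tau(f^1(w))]$, $[\tau(e^1(z)),\tau(f^1(w))]$: each requires expanding a cubic normally-ordered expression against a linear one, collecting single and double contractions, and recognizing the result as $\tau(h(w))\delta(z/w)$ (resp.\ $\tau(h^1)$, resp.\ $\tau(h)+4h+\cdots$) plus the central term. The appearance of $\tau(h^1(z))=2:\alpha^1\alpha^*:+2(z^2+4z):\alpha\alpha^{1*}:+\beta^1$ on the right of $[\tau(e(z)),\tau(f^1(w))]$, and of $h_{m+n+2}+4h_{m+n+1}$ (i.e.\ multiplication by $z^2+4z$ in generating-function language, reflecting $u^2=t^2+4t$) on the right of $[\tau(e^1),\tau(f^1)]$, is the key structural check; here Taylor's theorem (\thmref{Taylorsthm}) is used to move between $\delta(z/w)$ times a function of $z$ and the same times a function of $w$, generating the $\partial\delta$ corrections that must be absorbed into the central terms.

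The main obstacle I expect is precisely this last family of computations, and within it the bookkeeping of the $(z^2+4z)$ and $(z+2)$ prefactors together with the $\chi_0\partial\alpha^*$ and $\chi_0((z^2+4z)\partial_z\alpha^{1*}+(z+2)\alpha^{1*})$ ``improvement'' terms: these terms are there exactly so that the double contractions in $[\tau(e^1(z)),\tau(f^1(w))]$ and $[\tau(e(z)),\tau(e^1(w))]$-type rearrangements produce the right multiple of $\omega_0$ rather than an uncancelled $\partial^2\delta$ or $\partial\delta$ term, and getting their coefficients to close simultaneously for both $r=0$ and $r=1$ is delicate. I would organize this by first checking the $r=1$ (natural normal ordering) case, where many contractions vanish and the algebra is leaner, then reinstating the extra contractions for $r=0$ and confirming that the only net effect is the replacement $\kappa_0\rightsquigarrow\kappa_0+4=\chi_0$ in $\tau(\omega_0)$, consistent with \lemref{heisenbergprop}. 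Finally, since \thmref{3ptthm} presents $\hat{\mathfrak g}$ by generators and relations, verifying the relations \eqnref{r1}--\eqnref{r3} on all coefficients is sufficient to conclude that $\tau$ extends to a well-defined representation, completing the proof.
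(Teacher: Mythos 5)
Your proposal matches the paper's proof in both strategy and machinery: the paper verifies the generating-function relations \eqref{r1}--\eqref{r3} (equivalently the presentation of \thmref{3ptthm}) bracket by bracket using Wick's theorem, Taylor's theorem, the contraction computations of \lemref{pairs}, and the identification $\tau(\omega_0)=\chi_0=\kappa_0+4\delta_{r,0}$ that absorbs the normal-ordering anomaly, exactly as you outline. The one caution is that the heaviest computation in the paper is the vanishing of the Serre-type brackets $[\tau(e)_\lambda\tau(e^1)]$ (and $[\tau(e)_\lambda\tau(e)]$, $[\tau(e^1)_\lambda\tau(e^1)]$), which you mention only in passing; these cubic-against-cubic cancellations are precisely where the $\chi_0\partial\alpha^*$ and $\chi_0\left(P\,\partial\alpha^{1*}+\tfrac12(\partial P)\alpha^{1*}\right)$ improvement terms earn their keep, so they must be carried out in full.
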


Before we go through the proof it will be fruitful to review V. Kac's $\lambda$-notation (see \cite{MR99f:17033} section 2.2 and \cite{MR1873994} for some of its properties) used in operator product expansions.  If $a(z)$ and $b(w)$ are formal distributions, then
$$
[a(z),b(w)]=\sum_{j=0}^\infty \frac{(a_{(j)}b)(w)}{(z-w)^{j+1}}
$$
is transformed under the {\it formal Fourier transform} 
$$
F^{\lambda}_{z,w}a(z,w)=\text{Res}_ze^{\lambda(z-w)}a(z,w),
$$
 into the sum
\begin{equation*}
[a_\lambda b]=\sum_{j=0}^\infty \frac{\lambda^j}{j!}a_{(j)}b.
\end{equation*}
Set
$$
P(w) =w^2+4w.
$$
So for example we have the following 

\begin{lem} \label{pairs}
Given the definitions in the previous section, we have
\begin{enumerate}
\item $
[\beta^1_\lambda\beta^1]=-\left(2(w^2+4w)\lambda+(2w+4)\right)\kappa_0=-\left(2P\lambda+\partial  P\right)\kappa_0$ 
\item
$[:\alpha\alpha^*:{_\lambda}:\alpha\alpha^*:]=-\delta_{r,0}\partial\delta(z/w)$
\item
$
[:\alpha(\alpha^*)^2:{_\lambda}:\alpha(\alpha^*)^2:]=-4\delta_{r,0}:\alpha^*\partial \alpha^* :-4\delta_{r,0}:(\alpha^*)^2:\lambda
$.
\end{enumerate}
\end{lem}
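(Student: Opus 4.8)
The plan is to compute each of the three $\lambda$-brackets directly from the definitions of the distributions involved, using Wick's Theorem and the contraction formulas established in the previous section, then translate the operator product expansions into $\lambda$-notation via the formal Fourier transform. Throughout, the only nonzero contractions available are $\lfloor\alpha(z)\alpha^*(w)\rfloor$ and $\lfloor\alpha^1(z)\alpha^{1*}(w)\rfloor$ (together with their $\alpha^*\alpha$ reorderings), which equal $\iota_{z,w}\bigl(1/(z-w)\bigr)$ when $r=0$ and vanish when $r=1$, while the Heisenberg contractions $\lfloor\beta\beta\rfloor$, $\lfloor\beta^1\beta^1\rfloor$ are governed by \eqnref{b1}--\eqnref{b2}; this is why a factor of $\delta_{r,0}$ appears in parts (2) and (3) but not in part (1).

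For part (1), I would start from the Heisenberg relation $[\beta^1(z),\beta^1(w)]=-2\bigl((w^2+4w)\partial_w\delta(z/w)+(2+w)\delta(z/w)\bigr)\mathbf 1_0$ recorded just after \eqnref{bosonrelations}, note that $\tau$ sends $\mathbf 1_0$ to $\kappa_0$ on the Heisenberg part, and apply $F^\lambda_{z,w}$: since $F^\lambda_{z,w}$ turns $\partial_w\delta(z/w)$ into $\lambda$ and $\delta(z/w)$ into $1$, this gives $[\beta^1{}_\lambda\beta^1]=-\bigl(2(w^2+4w)\lambda+(2w+4)\bigr)\kappa_0$, which is $-\bigl(2P\lambda+\partial P\bigr)\kappa_0$ by the definition $P(w)=w^2+4w$. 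For parts (2) and (3), I would apply Wick's Theorem to the products $:\alpha(z)\alpha^*(z):$ and $:\alpha(z)(\alpha^*(z))^2:$ against their copies at $w$. In part (2) there are exactly two single contractions, $\lfloor\alpha(z)\alpha^*(w)\rfloor$ and $\lfloor\alpha^*(z)\alpha(w)\rfloor$, and one double contraction; the double contraction contributes a term proportional to $1/(z-w)^2$, and after symmetrizing and using $\iota_{z,w}\bigl(1/(z-w)\bigr)-\iota_{z,w}\bigl(1/(w-z)\bigr)=\delta(z/w)$ one reads off, via \thmref{kac}, that the bracket is $-\delta_{r,0}\partial\delta(z/w)$, i.e. $[:\alpha\alpha^*:{}_\lambda:\alpha\alpha^*:]=-\delta_{r,0}\partial\delta(z/w)$ — here I'd be slightly careful that the stated answer is written as a distribution rather than in pure $\lambda$-form. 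Part (3) is the same computation with more terms: Wick's Theorem produces single, double, and triple contractions among the three $\alpha^*$'s and the one $\alpha$ in each factor, the triple contraction vanishes because it would require contracting $\alpha$ with $\alpha$, and the surviving single and double contractions assemble (after collecting normal-ordered monomials in $\alpha^*,\partial\alpha^*$) into $-4\delta_{r,0}:\alpha^*\partial\alpha^*:-4\delta_{r,0}:(\alpha^*)^2:\lambda$.

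The main obstacle is the bookkeeping in part (3): keeping track of the combinatorial multiplicities coming from the two identical $\alpha^*$ factors in each of the two normal-ordered cubics, correctly pairing which $\alpha^*$ at $z$ contracts with the $\alpha$ at $w$ versus the other way around, and then using Taylor's Theorem (\thmref{Taylorsthm}) to expand $\alpha^*(z)$ about $w$ so that the double-contraction term, which carries a $1/(z-w)^2$, gets matched against the $\lambda^1$ coefficient and produces the $:(\alpha^*)^2:\lambda$ piece while the single contractions produce the $:\alpha^*\partial\alpha^*:$ piece. Everything else is a routine but careful application of the machinery already set up; no new ideas are needed beyond Wick's Theorem, the explicit contractions, and the Fourier-transform dictionary.
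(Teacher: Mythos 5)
Your proposal is correct and follows essentially the same route as the paper: parts (2) and (3) are handled by Wick's Theorem together with Taylor expansion of the coefficients of $1/(z-w)$ and $1/(z-w)^2$ (the single-contraction terms cancelling and the double contraction carrying the $\delta_{r,0}$), exactly as in the printed proof. You additionally supply an argument for part (1), via the displayed relation for $[\beta^1(z),\beta^1(w)]$ and the formal Fourier transform, which the paper states but does not prove.
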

Note that similar expressions hold for $\alpha^1$ and $\alpha^{1*}$.
\begin{proof} We'll prove (2) and (3). By Wick's Theorem
\begin{align*}
:\alpha(z)\alpha^*(z)::\alpha(w)\alpha^*(w):  &=:\alpha(z)\alpha^*(z)\alpha(w)\alpha^*(w): +\lfloor \alpha(z),\alpha^*(w)\rfloor :\alpha(w)\alpha^*(z):+\lfloor \alpha^*(z),\alpha(w)\rfloor :\alpha(z)\alpha^*(w): \\
&\quad +\lfloor \alpha(z),\alpha^*(w)\rfloor\lfloor \alpha^*(z),\alpha(w)\rfloor  \\
&= :\alpha(z)\alpha(w)\alpha^*(z)\alpha^*(w): + :\alpha(w)\alpha^*(z):\iota_{z,w}\left(\frac{1}{z-w}\right)+   :\alpha(z)\alpha^*(w):\iota_{z,w}\left(\frac{1}{w-z}\right)  \\
&\quad  -\delta_{r,0}\iota_{z,w}\left(\frac{1}{z-w}\right)^2
\end{align*}
 and
\begin{align*}
[: \alpha(z) \alpha^*(z)^2:,: \alpha(w) \alpha^*(w)^2:]&=2: \alpha(z) \alpha^*(z) \alpha^*(w)^2:\delta(z/w)-2: \alpha(w) \alpha^*(z)^2 \alpha^*(w):\delta(z/w) \\
&\quad -4\delta_{r,0}: \alpha^*(z) \alpha^*(w):\partial_w\delta(z/w)  \\ 
&= -4\delta_{r,0}: \alpha^*(z)\partial_w\left( \alpha^*(w):\delta(z/w)\right)+4\delta_{r,0}: \alpha^*(z)\partial_w \alpha^*(w):\delta(z/w)  \\
&= -4\delta_{r,0}:\partial_w\left( \alpha^*(w) \alpha^*(w):\delta(z/w)\right)+4\delta_{r,0}: \alpha^*(w)\partial_w \alpha^*(w):\delta(z/w)  \\
&= -4\delta_{r,0}:\partial_w \alpha^*(w) \alpha^*(w):\delta(z/w)-4\delta_{r,0}: \alpha^*(w) \alpha^*(w):\partial_w\delta(z/w).
\end{align*}

\end{proof}

\begin{proof} (Theorem \ref{mainthm})
We have need to check the following table is preserved under  $\tau$. 
\begin{table}[htdp]
\caption{}
\begin{center}
\begin{tabular}{c|cccccc}
$[\cdot_\lambda \cdot]$ & $f(w)$ & $f^1(w)$ & $h(w)$ & $h^1(w)$ & $e(w)$ & $e^1(w)$ \\
\hline
$f(z)$ & $0$ &  $0$ & $*$ &  $*$ &$*$  &$*$    \\
$f^1(z)$ &   &  $0$  & $*$  & $*$  & $*$  &$*$  \\
$h(z)$ &  &   &  $*$  & $*$  & $*$  & $*$ \\
$h^1(z)$ &&   &     & $*$  &    $*$ & $*$ \\
$e(z)$  &&   &     &  &    $0$ &  $0$  \\
$e^1(z)$ & &   &     &  &   &  $0$ \\
\end{tabular}
\end{center}
\label{default}
\end{table}%

Here $*$ indicates nonzero formal distributions that are obtained from the the defining relations \eqref{r1}, \eqref{r2}, and \eqref{r3}.
The proof is carried out using Wick's Theorem, Taylor's Theorem, and Lemma \ref{pairs} as one can see below:

\begin{align*}
[\tau(f)_\lambda\tau(f)]&=0,\quad [\tau(f)_\lambda\tau(f^1)]= 0,\\ \\
[\tau(f)_\lambda\tau(h)]&=-\Big[\alpha_\lambda\Big(2\left(\alpha\alpha^*+\alpha^1\alpha^{1*} \right)   +\beta \Big)\Big]=  -2\alpha =   2\tau(f) , \\  \\
[\tau(f)_\lambda\tau(h^1)]&=-[\alpha_\lambda\left(2\left(\alpha^1\alpha^*
   +P\alpha\alpha^{1*} \right)
     +\beta^1\right)  ]= -2\alpha^{1}=   2\tau(f^1), \\  \\
[\tau(f)_\lambda\tau(e)]&=- [\alpha_\lambda\left(:\alpha(\alpha^*)^2:+P:\alpha(\alpha^{1*})^2: +2 :\alpha^1\alpha^*\alpha^{1*}:+\beta\alpha^*+\beta^1\alpha^{1*} +\chi_0\partial\alpha^*\right)] \\  
&=-2\left(:\alpha\alpha^*:    + :\alpha^1\alpha^{1*}: \right)-\beta-\chi_0\lambda  
= -\tau(h)-\chi_0\lambda   \\ \\
\tau(f)_\lambda\tau(e^1)]&=- \Big[\alpha_\lambda\Big(\alpha^1(\alpha^*)^2 
 	+P\left(\alpha^1 (\alpha^{1*} )^2 +2 : \alpha \alpha^{*} \alpha^{1*}:\right)  \  +\beta^1 \alpha^* +P\beta \alpha^{1*} +\chi_0\left(P  \partial \alpha^{1*}   +\frac{1}{2}\partial  P   \alpha^{1*} \right) \Big)\Big]\\  
&=-2\left(:\alpha^1\alpha^*:    + P:\alpha\alpha^{1*}: \right)-\beta^1   =-\tau(h^1).
\\ \\ \\  \\
[\tau(f^1)_\lambda\tau(f^1)]&=0  \\ \\
[\tau(f^1)_\lambda\tau(h)]&=-[\alpha^1_\lambda\left(2\left(:\alpha\alpha^*:+:\alpha^1\alpha^{1*}: \right)   +\beta\right) ]=  -2\alpha^1 =   2\tau(f^1),  \\  \\
[\tau(f^1)_\lambda\tau(h^1)]&=-[\alpha^1_\lambda\left(2\left(:\alpha^1\alpha^*:
   +P:\alpha\alpha^{1*}: \right)
     +\beta^1\right)] =-2P\alpha^{1} =2P\tau(f^{1}) , \\  \\
[\tau(f^1)_\lambda\tau(e)]&=- [\alpha^1_\lambda\left(:\alpha(\alpha^*)^2:+P:\alpha(\alpha^{1*})^2: +2 :\alpha^1\alpha^*\alpha^{1*}:+\beta\alpha^*+\beta^1\alpha^{1*} +\chi_0\partial\alpha^*\right)] \\  
&=- \Big(  2P:\alpha\alpha^{1*}: +2 :\alpha^1\alpha^*: +\beta^1  \Big)  
=-\tau(h^1) \\ \\
[\tau(f^1)_\lambda\tau(e^1)]&=-[ \alpha^1_\lambda\Big(\alpha^1(\alpha^*)^2 
 	+P\left(\alpha^1 (\alpha^{1*} )^2 +2 : \alpha \alpha^{*} \alpha^{1*}:\right)  \  +\beta^1 \alpha^* +P\beta \alpha^{1*} +\chi_0\left(P  \partial \alpha^{1*}   +\frac{1}{2}(\partial  P)   \alpha^{1*} \right) \Big)] \\  
&=-\Big(
 	P\left(  2\left(:\alpha^1\alpha^{1*}:   + :\alpha\alpha^* :\right)
	+\beta+\chi_0\lambda  \right)+\frac{1}{2}\chi_0\partial P  \Big)  \\
&=-\left(P\tau(h)+P\chi_0\lambda+\chi_0\frac{1}{2}\partial P\right).
\end{align*}
 Note that $:a(z)b(z):$ and $:b(z)a(z):$ are usually not equal, but $:\alpha^1(w)\alpha^{*1}(w):=:\alpha^{1*}(w)\alpha^1(w):$ and $:\alpha(w)\alpha^*(w):=:\alpha^*(w)\alpha (w):$.  Thus we calculate

\begin{align*}
[\tau(h)_\lambda\tau(h)]&=\left(2\left(:\alpha\alpha^*:+:\alpha^1\alpha^{1*}: \right)   +\beta\right)  _\lambda \left(2\left(:\alpha\alpha^*:+:\alpha^1\alpha^{1*}: \right)   +\beta\right)]  \\
&= 4\Big(-:\alpha\alpha^*: +:\alpha^*\alpha: -:\alpha^1\alpha^{1*}: +:\alpha^{1*}\alpha^1:  \Big)  
-8\delta_{r,0}\lambda +[\beta_\lambda\beta]   \\
&=-2(4\delta_{r,0}+\kappa_0)\lambda
 \end{align*}
 Which can be put into the form of \eqref{r1}:
 \begin{align*}
\left[\tau(h(z)),\tau(h(w))\right]&=
 -2(4\delta_{r,0}+\kappa_0)\partial_{w}\delta(z/w)=
 -2\chi_0\partial_{w}\delta(z/w)=
 \tau\left(-2\omega_0\partial_{w}\delta(z/w)\right).  
  \end{align*}
  
Next we calculate

\begin{align*}
[\tau(h)_\lambda\tau(h^1)]&=\left[\left(2\left(:\alpha\alpha^*:+:\alpha^1\alpha^{1*}: \right)   +\beta \right) _\lambda \left(2\left(:\alpha^1\alpha^*:
   +P:\alpha\alpha^{1*}: \right)+\beta^1  \right) \right]\\  
&=4\Big(\left(:\alpha^* \alpha^1 : -:\alpha^1 \alpha^{*} :\right)   + P \left(-:\alpha\alpha^{1*} :  +:\alpha^{1*}\alpha :  \right)    \Big)  +[\beta_\lambda\beta^1]   
\end{align*}
Since $[a_n,a_m^{1*}]=[a^1_n,a_m^{*}]=0$, we have 
 \begin{align*}
\left[\tau(h(z)),\tau(h^1(w))\right]&=[\beta(z),\beta^1(w)]=0.
 \end{align*}
 As $\tau(\omega_1) =0$,  relation \eqref{r3} is satisfied. 
 
 We continue with

\begin{align*}
[\tau(h^1)_\lambda\tau(h^1)]&=\left[2\left(:\alpha^1\alpha^*:
   +P:\alpha\alpha^{1*}: \right)
     +\beta^1  \right)_\lambda \left(2\left(:\alpha^1 \alpha^* :
   +P:\alpha\alpha^{1*}: \right)
     +\beta^1 )  \right] \\  
&= 4P\left(-:\alpha\alpha^*:+:\alpha^{1*}\alpha^1:\right) +4P\left(-:\alpha^1\alpha^{1*} : +:\alpha^*\alpha: \right) 
  \\
&\quad  -8\delta_{r,0}P\lambda   -4\delta_{r,0}\partial P    + [\beta^1_\lambda \beta^1]   \\
&=-8\delta_{r,0}P\lambda   -4\delta_{r,0}\partial P   -2\kappa_0(P\lambda+\frac{1}{2}\partial P).
 \end{align*}
 Yielding the relation
\begin{align*}
\left[\tau(h^1(z)),\tau(h^1(w))\right]&=-2(4\delta_{r,0}+\kappa_0)\left(\left(w^2+4w\right)\partial_w\delta(z/w)   +(w+2))\delta(z/w)\right) \\
& = \tau(-(h,h)  \omega_0 P \partial_w\delta(z/w) -\frac{1}{2}(h,h) \partial P \omega_0  \delta(z/w))
\end{align*}

Next we calculate the $h$'s paired with the $e$'s:

\begin{align*}
[\tau(h)_\lambda \tau(e )]&=\Big[\Big(2\left(:\alpha\alpha^*:+:\alpha^1\alpha^{1*}: \right)
     +\beta \Big)_\lambda \\
& \quad :\alpha(\alpha^*)^2:+ P:\alpha(\alpha^{1*})^2: +2 :\alpha^1\alpha^*\alpha^{1*}:+ \beta\alpha^*+\beta^1\alpha^{1*} +\chi_0\partial\alpha^*\Big] \\ 
& = 4: \alpha (\alpha^*)^2:  - 2 : \alpha  (\alpha^*)^2:   - 4 \delta_{r,0} \alpha^* \lambda \\
&- 2P : \alpha(\alpha^{1*})^2:  
 + 4: \alpha^*  \alpha^1  \alpha^{1*}  :  \\
& + 2  \alpha^*  \beta 
 + 2\chi_0  \alpha^* \lambda +  2\chi_0  \partial  \alpha^*\\
 &+ 4P : \alpha (\alpha^{1*})^2: - 4 \delta_{r,0} \alpha^* \lambda \\
&  + 2     \beta^1   \alpha^{1*}  - 2 \lambda \alpha^* \kappa_0 \\
&= 2 \tau (e)
\end{align*}
and
\begin{align*}
[\tau(h^1)_\lambda \tau(e)] & = \left[ \left(2\left(:\alpha^1\alpha^*:
   +P:\alpha\alpha^{1*}: \right) + \beta^1 \right)_\lambda 
 :\alpha(\alpha^*)^2:+ P:\alpha(\alpha^{1*})^2: +2 :\alpha^1\alpha^*\alpha^{1*}:+ \beta\alpha^*+\beta^1\alpha^{1*} +\chi_0\partial\alpha^*\right]\\
 &=  -2  : \alpha^1( \alpha^*)^2 
 + 2P(2 : \alpha \alpha^* \alpha^{1*}: - :  \alpha^1  (\alpha^{1*})^2: - 2 \delta_{r,0} \alpha^{1*}  \lambda ) - 4 \delta_{r,0} \partial P  \alpha^{1*} \\
 &+ 4 :  \alpha^1( \alpha^*)^2: + 2 \alpha^* \beta^1 \\
& + 4P: \alpha \alpha^{1*} \alpha^* + 4P(  : \alpha^1 (\alpha^{1*})^2 : - : \alpha \alpha^{*}\alpha^{1*}:  -\delta_{r,0}   \alpha^{1*}\lambda ) \\
&+ 2P  \beta \alpha^{1*}+ 2 \chi_0 ( P\partial \alpha^{1*}   + \partial P \alpha^{1*}   +P \alpha^{1*}  
 \lambda  )\\
& - 2 ( P \lambda + \frac{1}{2} \partial P ) \alpha^{1*} \kappa_0\\
&=  2   :  \alpha^1( \alpha^*)^2:  +  2P  :  \alpha^1  (\alpha^{1*})^2:  + 4P:  \alpha \alpha^*  \alpha^{1*}:  +
2 \delta(z/w) \alpha^* \beta^1+ 2P\beta \alpha^{1*}      \\
& + 2P \chi_0 \partial   \alpha^{1*}  +  \partial P  \alpha^{1*} \chi_0 \\
& = 2\tau (e^1  )
\end{align*}

Next we must calculate

\begin{align*}
[\tau(h)_\lambda \tau(e^1)]&=\Big[\Big(2\left(:\alpha\alpha^*:+:\alpha^1\alpha^{1*}: \right)
     +\beta \Big)_\lambda \\
&\quad  : \alpha^1\alpha^*\alpha^* :
 	+P\left(:\alpha^1 (\alpha^{1*} )^2 :+2 : \alpha \alpha^{*} \alpha^{1*}:\right)  \\
&\quad +\beta^1 \alpha^* +P\beta \alpha^{1*}  +\chi_0\left(P  \partial \alpha^{1*}   +(1/2 \partial P )  \alpha^{1*} \right)\Big] \\  
&= 4  :\alpha^1 (\alpha^* )^2: - 4 P\delta_{r,0}   \alpha^{1*} \lambda \\ 
& + 2   \alpha^* \beta^1  - 2\delta(z/w) : \alpha^1 \alpha^* \alpha^* :\\
&-4P\delta_{r,0}  \alpha^{1*} \lambda + 2P  : \alpha^{1*}\alpha^1 \alpha^{1*}: \\
&+ 4P : \alpha^{1*} \alpha \alpha^*:
+2P\beta\alpha^{1*}    \\
&+ 2 \chi_0(   P \alpha^{1*}  \lambda +  P  \partial \alpha^{1*} + \frac{1} {2} \partial P   \alpha^{1*} ) 
  - 2 P \alpha^{1*}  \kappa_0 \lambda \\
& = 2:  \alpha^1( \alpha^*)^2:   +  2P : \alpha^1 (\alpha^{1*})^2:  + 4P: \alpha \alpha^* \alpha^{1*} : \\
 & + 2 \beta^1  \alpha^*   +2P\beta\alpha^{1*}     +2\chi_0\left(P  \partial_w\alpha^{1*}   +(1/2 \partial P )  \alpha^{1*} \right)\\
 & =  2\tau(e^1 )
\end{align*}
and the proof for $[\tau(h^1)_\lambda \tau(e^1)]$ is similar. 
%

We prove the Serre relation for just one of the relations, $[\tau(e)_\lambda \tau(e^1)]$, and the proof of the others ($[\tau(e)_\lambda \tau(e)]$, $[\tau(e^1)_\lambda \tau(e^1)]$) are similar as the reader can verify. 

\begin{align*} 
[\tau(e)_\lambda \tau(e^1)]
&=  \Big[:\alpha(\alpha^*)^2:  _\lambda \Big(\alpha^1(\alpha^*)^2 
 	+(w^2+4w)\left(\alpha^1 (\alpha^{1*} )^2 +2 : \alpha \alpha^{*} \alpha^{1*}:\right)  \\
&\qquad +\beta^1 \alpha^* +(w^2+4w)\beta \alpha^{1*}  +\chi_0\left((w^2+4w)   \partial_w\alpha^{1*}   +(w+2)   \alpha^{1*} \right)\Big)   \Big]\\  
&\quad + \Big[P:\alpha(\alpha^{1*})^2 :{_\lambda}  \Big(:\alpha^1(\alpha^*)^2 :
 	+(w^2+4w)\left(\alpha^1 (\alpha^{1*} )^2 +2 : \alpha \alpha^{*} \alpha^{1*}:\right)  \\
&\qquad +\beta^1 \alpha^* +(w^2+4w)\beta \alpha^{1*}  +\chi_0\left((w^2+4w)   \partial_w\alpha^{1*}   +(w+2)   \alpha^{1*} \right)\Big)   \Big]\\   
&\quad +   \Big[2 :\alpha^1\alpha^*\alpha^{1*}:{_\lambda}  \Big(:\alpha^1(\alpha^*)^2 :
 	+(w^2+4w)\left(\alpha^1 (\alpha^{1*} )^2 +2 : \alpha \alpha^{*} \alpha^{1*}:\right)  \\
&\qquad +\beta^1 \alpha^* +(w^2+4w)\beta \alpha^{1*}  +\chi_0\left((w^2+4w)   \partial_w\alpha^{1*}   +(w+2)   \alpha^{1*} \right)\Big)   \Big]\\   
&\quad    +  \Big[\beta\alpha^*{ _\lambda}  \Big(:\alpha^1(\alpha^*)^2 :
 	+(w^2+4w)\left(\alpha^1 (\alpha^{1*} )^2 +2 : \alpha \alpha^{*} \alpha^{1*}:\right)  \\
&\qquad +\beta^1 \alpha^* +(w^2+4w)\beta \alpha^{1*}  +\chi_0\left((w^2+4w)   \partial_w\alpha^{1*}   +(w+2)   \alpha^{1*} \right)\Big)    \Big]\\  
&\quad  +\Big[\beta^1\alpha^{1*}{ _\lambda}  \Big(:\alpha^1(\alpha^*)^2 :
 	+(w^2+4w)\left(\alpha^1 (\alpha^{1*} )^2 +2 : \alpha \alpha^{*} \alpha^{1*}:\right)  \\
&\qquad +\beta^1 \alpha^* +(w^2+4w)\beta \alpha^{1*}  +\chi_0\left((w^2+4w)   \partial_w\alpha^{1*}   +(w+2)   \alpha^{1*} \right)\Big)    \Big]\\   
&\quad +\Big[  \chi_0\partial\alpha^*{_\lambda}  \Big(:\alpha^1(\alpha^*)^2 :
 	+(w^2+4w)\left(\alpha^1 (\alpha^{1*} )^2 +2 : \alpha \alpha^{*} \alpha^{1*}:\right)  \\
&\qquad +\beta^1 \alpha^* +(w^2+4w)\beta \alpha^{1*}  +\chi_0\left((w^2+4w)   \partial_w\alpha^{1*}   +(w+2)   \alpha^{1*} \right)\Big)   \Big] \\ \\  
&=   \Big[:\alpha(\alpha^*)^2:  _\lambda \Big(:\alpha^1(\alpha^*)^2 :
 	+2P : \alpha \alpha^{*} \alpha^{1*}:  +\beta^1 \alpha^* \Big)   \Big]\\  \\ 
&\quad + \Big[P:\alpha(\alpha^{1*})^2 :{_\lambda}  \Big(:\alpha^1(\alpha^*)^2:
 	+P\left(:\alpha^1 (\alpha^{1*} )^2: +2 : \alpha \alpha^{*} \alpha^{1*}:\right) +\beta^1 \alpha^* \Big)   \Big]\\  \\ 
&\quad +   \Big[2 :\alpha^1\alpha^*\alpha^{1*}:{_\lambda}  \Big(\alpha^1
(\alpha^*)^2 
 	+P\left(:\alpha^1 (\alpha^{1*} )^2: +2 : \alpha \alpha^{*} \alpha^{1*}:\right)  +P\beta \alpha^{1*}  \\
&\hskip 100pt +\chi_0\left((w^2+4w)   \partial_w\alpha^{1*}   +(w+2)   \alpha^{1*} \right)\Big)   \Big]\\  \\ 
&\quad    +  \Big[\beta\alpha^*{ _\lambda}  \Big(2P : \alpha \alpha^{*} \alpha^{1*}: +\beta^1 \alpha^* +P\beta \alpha^{1*}\Big)    \Big]\\  \\
&\quad  +\Big[\beta^1\alpha^{1*}{ _\lambda}  \Big(:\alpha^1(\alpha^*)^2 :
 	+P\alpha^1 (\alpha^{1*} )^2  +\beta^1 \alpha^* +P\beta \alpha^{1*} \Big)    \Big]\\  \\  
&\quad 
	  +\Big[  \chi_0\partial\alpha^*{_\lambda}  \Big( 2P : \alpha \alpha^{*} \alpha^{1*}:\Big)   \Big]  
\end{align*}

\begin{align*}
&=   2:\alpha^1\alpha^*(\alpha^*)^2 :
 	+2P: \alpha( \alpha^{*})^2 \alpha^{1*}: -4P:\alpha( \alpha^{*})^2 \alpha^{1*}: -4\delta_{r,0}P:   \alpha^{*} \alpha^{1*}:\lambda-4\delta_{r,0}P:  \partial( \alpha^{*}) \alpha^{1*}:  \\
	&\hskip 100pt   +\beta^1 (\alpha^*)^2
	 \\  \\ 
&\quad -2P:\alpha(\alpha^*)^2\alpha^{1*}:+2P\alpha^1\alpha^*(\alpha^{1*})^2 \\ 
&\quad  -4\delta_{r,0}  P:\alpha^{1*} \alpha^*:\lambda-4\delta_{r,0}\partial P:\alpha^{1*} \alpha^*:-4\delta_{r,0} P:\partial\alpha^{1*} \alpha^*:
  \\
&\quad -2P^2:\alpha  (\alpha^{1*} )^3: +2 P^2: \alpha(\alpha^{1*})^3:  +P\beta^1 (\alpha^{1*})^2  
 \\  \\ 
&\quad -  2 :\alpha^1\alpha^*(\alpha^*)^2:
 	+ 4P:\alpha^1  \alpha^*(\alpha^{1*})^2:-2P:\alpha^1\alpha^* (\alpha^{1*} )^2:-4\delta_{r,0}P: \alpha^*\alpha^{1*}:\lambda  -4\delta_{r,0}P: \partial(\alpha^*)\alpha^{1*}: \\
&\quad+4 P : \alpha(\alpha^*)^2\alpha^{1*}:  -4 P :\alpha^1  \alpha^{*} (\alpha^{1*})^2:  \\
&\quad -4\delta_{r,0}  P: \alpha^*\alpha^{1*}:\lambda 
  -4\delta_{r,0}  P: \alpha^*\partial\alpha^{1*}:  + 2P\beta:\alpha^*\alpha^{1*}:\\  \\ 
&\quad +   2\chi_0\Big( P:\partial \alpha^*\alpha^{1*}:+P :\alpha^*\partial\alpha^{1*}:+P :\alpha^*\alpha^{1*}:\lambda+\frac{1}{2}(\partial P):\alpha^* \alpha^{1*}:  \Big)\\  \\ 
&\quad    - 2P \beta   \alpha^{*} \alpha^{1*}
-2\kappa_0P\alpha^* \alpha^{1*} \lambda -2\kappa_0P\partial \alpha^* \alpha^{1*}   \\  \\
&\quad  -\beta^1(\alpha^*)^2 
 	-P\beta^1 (\alpha^{1*} )^2  -\kappa_0\left(2P\alpha^*\alpha^{1*}\lambda+2P\alpha^* \partial\alpha^{1*}+\partial  P\alpha^*\alpha^{1*} \right)    \\
&\quad 
+2 \chi_0P \alpha^{*} \alpha^{1*}\lambda     \\ \\ 
&=   -4\delta_{r,0}P:   \alpha^{*} \alpha^{1*}:\lambda-4\delta_{r,0}P:  \partial( \alpha^{*}) \alpha^{1*}:  \\
	&\hskip 100pt  +\chi_1\left(2:\alpha^*\partial \alpha^*:+:(\alpha^*)^2:\lambda  \right) \\  \\ 
&\quad  -4\delta_{r,0}  P:\alpha^{1*} \alpha^*:\lambda-4\delta_{r,0}\partial P:\alpha^{1*} \alpha^*:-4\delta_{r,0} P:\partial\alpha^{1*} \alpha^*:
  \\
&\quad
-4\delta_{r,0}P: \alpha^*\alpha^{1*}:\lambda  -4\delta_{r,0}P: \partial(\alpha^*)\alpha^{1*}: \\
&\quad -4\delta_{r,0}  P: \alpha^*\alpha^{1*}:\lambda 
    -4\delta_{r,0}  P: \alpha^*\partial\alpha^{1*}:  \\  \\ 
&\quad +   2\chi_0\Big( P:\partial \alpha^*\alpha^{1*}:+P :\alpha^*\partial\alpha^{1*}:+P :\alpha^*\alpha^{1*}:\lambda+\frac{1}{2}(\partial P):\alpha^* \alpha^{1*}:  \Big)\\  \\ 
&\quad   
-\kappa_0P\alpha^* \alpha^{1*} \lambda -\kappa_0P\partial \alpha^* \alpha^{1*}   \\  \\
&\quad  -\kappa_0\left(2P\alpha^*\alpha^{1*}\lambda+2P\alpha^* \partial\alpha^{1*}+\partial  P\alpha^*\alpha^{1*} \right)    \\
&\quad
 -2 \chi_0P \alpha^{*} \alpha^{1*}\lambda  \\
 &=0.
\end{align*}

\end{proof}

\section{Further Comments}
 We plan to use the above construction to help elucidate the structure of these representations of a three point algebra, describe the space of their intertwining operators and eventually describe the center of a certain completion of the universal enveloping algebra for the three point algebra.


\bibliographystyle{amsplain}
\def\cprime{$'$} \def\cprime{$'$} \def\cprime{$'$}
\providecommand{\bysame}{\leavevmode\hbox to3em{\hrulefill}\thinspace}
\providecommand{\MR}{\relax\ifhmode\unskip\space\fi MR }
\providecommand{\MRhref}[2]{%
  \href{http://www.ams.org/mathscinet-getitem?mr=#1}{#2}
}
\providecommand{\href}[2]{#2}

 \end{document}